\newcommand{\ex}{\mathrm{ex}}
\newcommand{\Ex}{\mathrm{Ex}}
\newcommand{\co}{\mathrm{Co}}
\newtheorem{theorem}{Theorem}
\newtheorem{lemma}{Lemma}
\newtheorem{defin}{Definition}
\newtheorem{fact}{Fact}
\newtheorem{cor}{Corollary}
\def\le{\leqslant}
\def\ge{\geqslant}
\begin{document}

\title{Multicolor Ramsey numbers and restricted Tur\'an numbers for the loose 3-uniform path of length three}

\author{Eliza Jackowska
\\A. Mickiewicz University\\Pozna\'n, Poland\\\tt  elijac@amu.edu.pl
\and Joanna Polcyn
\\A. Mickiewicz University\\Pozna\'n, Poland\\{\tt joaska@amu.edu.pl}
 \and Andrzej Ruci\'nski\thanks{Research supported by the Polish NSC
grant 2014/15/B/ST1/01688.
}
\\A. Mickiewicz University\\Pozna\'n, Poland\\{\tt rucinski@amu.edu.pl}
}

\date{\today}

\maketitle

\begin{abstract}
Let $P$ denote  a 3-uniform hypergraph consisting of 7 vertices $a,b,c,d,e,f,g$ and 3 edges
$\{a,b,c\}, \{c,d,e\},$ and $\{e,f,g\}$. It is known that the $r$-colored Ramsey number for $P$ is
$R(P;r)=r+6$ for $r=2,3$, and that $R(P;r)\le 3r$ for all $r\ge3$. The latter result follows by a
standard application of the Tur\'an number $ex_3(n;P)$, which was determined to be $\binom{n-1}2$
in our previous work. We have also shown that the full star is the only extremal 3-graph for $P$.
In this paper, we perform a subtle analysis of the Tur\'an numbers for $P$ under some additional
restrictions. Most importantly, we determine the largest number of edges in an $n$-vertex $P$-free
3-graph which is not a star. These Tur\'an type results, in turn, allow us to confirm the formula
$R(P;r)=r+6$ for $r\in\{4,5,6,7\}$.
\end{abstract}

\section{Introduction}\label{intro}

In this paper we prove results about both Ramsey numbers and Tur\'an numbers for the \emph{loose
3-uniform path of length 3} defined as the hypergraph $P:=P^3_3$ consisting of 7 vertices, say,
$a,b,c,d,e,f,g$, and 3 edges $\{a,b,c\}, \{c,d,e\},$ and $\{e,f,g\}$. This is a very special case
of a more general notion of the $k$-uniform \textit{loose path $P^k_m$} of length $m$, where
$k,m\ge2$, defined as a $k$-uniform hypergraph (or $k$-graph, for short) with $m$ edges which can
be linearly ordered in such a way that every two consecutive edges intersect in exactly one vertex
while all other pairs of edges are disjoint. Note that some authors, e.g., in \cite{FJS,Kostochka}
call such paths \emph{linear}, while by loose they mean paths in which consecutive edges may intersect
on more vertices.

\textit{The complete $k$-graph} \textit{$K^k_n$} is a $k$-graph on $n$ vertices in which every
$k$-element subset of the vertex set forms an edge. For a given $k$-graph $F$ and an integer
$r\ge2$, the \textit{Ramsey number} $R(F;r)$ is the least integer $n$ such that  every $r$-coloring
of the edges of $K^k_n$ results in a monochromatic copy of $F$.

In the classical case of two colors ($r=2$), it is known already that for graphs ($k=2$)
$$R(P^2_m;2)\overset{\cite{OnRamsey}}{=}\left\lfloor\frac{3m+1}{2}\right\rfloor\;,\quad\mbox{ while for 3-graphs }\quad
 R(P^3_m; 2)\overset{\cite{Omidi}}{=}\left\lfloor\frac{5m+1}{2}\right\rfloor,$$
 both formulae holding for all $m\ge2$.
For higher dimensions ($k\ge4$),  only the numbers $R(P^k_m;2)$, $m=2,3,4$, have been determined
exactly (see \cite{GR}), while in \cite{GSS} an asymptotic formula $R(P^k_m;2)\sim (k-1/2)m$, $k$
fixed, $m\to\infty$, was established. For more than two colors, the only existing results are
$R(P;3)=9$ and $r+6\le R(P;r)\le3r$ for $r\ge3$ \cite{J, JPR}. We include below a simple proof of
the upper bound to recall the standard technique of using Tur\'an numbers for bounding Ramsey
numbers, as this is the starting point of the research presented in this paper.

 For a given $k$-graph $F$ and an integer $n\ge1$,  \textit{the Tur\'an number} $ex_k(n;F)$ is the largest number of edges in an $n$-vertex $F$-free $k$-graph
 (for a more general definition, see Section \ref{turan}.) Every $n$-vertex $F$-free $k$-graph with $ex_k(n;F)$ edges is called  \emph{extremal}.

 Clearly, if $\binom nk>r\cdot ex_k(n;F)$, then $R(F;r)\le n$.
 This trivial observation can sometimes be sharpened, owing to a specific structure of the extremal $k$-graphs.
 \emph{A star} is a hypergraph with a vertex, called \emph{the center}, contained in all the edges.
  An $n$-vertex $k$-uniform star is called \emph{full} and denoted by $S_n^k$ if it has $\binom{n-1}{k-1}$ edges.

  It has been proved in \cite{JPR} that for $n\ge8$, $ex_3(n;P)=\binom{n-1}2$ and that $S_n^3$
  is the only extremal $3$-graph.
 Thus, the above inequality is equivalent to  $n>3r$ and yields only that $R(P;r)\le 3r+1$.
If $n=3r$, then $\binom n3=r\cdot ex_3(n;P)$, meaning that for every $r$-coloring of $K_n^3$ either
there is a monochromatic copy of $P$ or every color forms a full star which, however, is
impossible.
 This was good enough to claim that $R(P;3)=9$ in \cite{J}, but  for $r=4$ it only yielded the bound $R(P;4)\le12$. To make further progress in pin-pointing the Ramsey numbers $R(P;r)$ one has to refine the analysis of the Tur\'an numbers  and extremal $3$-graphs for $P$ which, in our opinion, might be of independent interest.

Let us illustrate our approach by sticking to the case $r=4$ for a while.
 The lower bound on $R(P;4)$ is $r+6=10$ and $\tfrac14\binom{10}3=30<\binom92$. This only tells us that in every 4-coloring of $K_{10}^3$ a color must have been applied to at least 30 edges. If we only knew that the edges of that color formed a star (not necessarily full), then we could remove the center of that star reducing the picture to a 3-coloring of $K_9^3$ about which we already know that it does contain a monochromatic copy of $P$.

 In this paper we prove that this is, indeed, the case. In fact, we prove a much stronger result by determining precisely the largest number of edges in an $n$-vertex $P$-free $3$-graph which is not a subset of a star. We call this  \emph{ the Tur\'an number of the second order}. This approach works fine for $r=5$ and $r=7$, but, quite surprisingly, fails for $r=6$. In this case, we need to define \emph{ the Tur\'an number of the third order} and compute it for $n=12$.

 Our contribution to the Ramsey theory of hypergraphs is summarized in the following result.

 \begin{theorem}\label{main}
    For all $r\le7$, $R(P; r)=r+6$.
\end{theorem}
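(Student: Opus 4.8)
The plan is to prove Theorem~\ref{main} by establishing the matching upper bound $R(P;r)\le r+6$ for each $r\in\{4,5,6,7\}$ separately, since the lower bound $R(P;r)\ge r+6$ is already known and the cases $r=2,3$ are settled. The overall strategy, as foreshadowed in the introduction, is an induction on $r$ that peels off one color at a time: given an $r$-coloring of $K_n^3$ with $n=r+6$ and no monochromatic $P$, I would argue that some color class is (a subgraph of) a star, delete its center, and apply the bound for $r-1$ colors on $K_{n-1}^3=K_{(r-1)+6}^3$. For this reduction to close, the key input is not merely the classical Tur\'an number $ex_3(n;P)=\binom{n-1}{2}$, but the refined ``second-order'' Tur\'an number: the maximum number of edges in an $n$-vertex $P$-free 3-graph that is \emph{not} contained in a star. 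Call this quantity $ex_3^{(2)}(n;P)$. If in an $r$-coloring of $K_{r+6}^3$ no color class is a subgraph of a star, then every color class has at most $ex_3^{(2)}(r+6;P)$ edges, and I would derive a contradiction from $\binom{r+6}{3}>r\cdot ex_3^{(2)}(r+6;P)$; this is precisely the counting inequality that must be checked for $r=4,5,7$.

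First I would record the exact value (or a sufficiently strong upper bound) of $ex_3^{(2)}(n;P)$ established earlier in the paper, and verify the arithmetic $\binom{r+6}{3}>r\cdot ex_3^{(2)}(r+6;P)$ for $r=4,5$ and $r=7$. Assuming this holds, the argument for these three values of $r$ runs: in any $r$-coloring of $K_{r+6}^3$ with no monochromatic $P$, the pigeonhole-type inequality forces at least one color class $C$ to be too small to avoid being star-like unless it actually is contained in a star with some center $v$; then $K_{r+6}^3 - v = K_{(r-1)+6}^3$ inherits an $(r-1)$-coloring with no monochromatic $P$ (edges through $v$ disappear, the rest keep their colors), contradicting $R(P;r-1)=(r-1)+6$, which we have by induction. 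The base of the induction is $R(P;3)=9$, already known. One subtlety to handle carefully is the possibility that a color class has \emph{zero} edges or very few edges --- but that only makes it more obviously contained in a star, so it causes no difficulty; the genuine content is ensuring the second-order Tur\'an bound is tight enough that no color class can simultaneously be non-star-like and small.

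The case $r=6$ is the main obstacle, and the introduction already flags that the second-order approach ``quite surprisingly'' fails here: the inequality $\binom{12}{3}>6\cdot ex_3^{(2)}(12;P)$ does not hold, i.e.\ $220 \le 6\,ex_3^{(2)}(12;P)$, so one cannot rule out a $6$-coloring of $K_{12}^3$ in which every color class is a large non-star $P$-free 3-graph. To push through, I would invoke the ``third-order'' Tur\'an number $ex_3^{(3)}(12;P)$ defined in the paper --- the maximum number of edges in a $12$-vertex $P$-free 3-graph that is neither contained in a star nor of the second extremal type --- together with a structural classification of the 3-graphs achieving the second-order maximum on $12$ vertices. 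The argument then becomes a finite case analysis: either some color class is star-like (reduce to $r=5$ as above), or some color class is small enough that it must be one of the few near-extremal configurations, whose special structure one exploits directly; the counting inequality to verify is now $\binom{12}{3}>5\cdot ex_3^{(2)}(12;P)+ex_3^{(3)}(12;P)$, or a variant thereof, which I would check and then combine with the structural description to locate a monochromatic $P$ by hand. I expect the bulk of the work --- and the place where the paper's fine-grained Tur\'an analysis is indispensable --- to be this $r=6$ case: identifying exactly which $P$-free $12$-vertex 3-graphs are large but not star-like, and showing that six such graphs cannot tile the edge set of $K_{12}^3$.
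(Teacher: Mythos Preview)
Your plan for $r=4,5,7$ is exactly the paper's argument: pigeonhole gives a color class with at least $\lceil\binom{r+6}{3}/r\rceil$ edges, this exceeds $\ex_3^{(2)}(r+6;P)$ (namely $30>24$, $33>30$, $41>40$), so that class lies in a star, and deleting the center reduces to $r-1$. Nothing to add there.

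The gap is in your treatment of $r=6$. The counting inequality you propose, $\binom{12}{3}>5\cdot\ex_3^{(2)}(12;P)+\ex_3^{(3)}(12;P)$, is \emph{false}: $220<5\cdot40+32=232$. So you cannot conclude by pure counting that some color class avoids both the star and the $K_6^3\cup K_6^3$ pattern. The paper's actual argument is more delicate. First, no color class is a substar (else reduce to $r=5$). Some class has at least $\lceil 220/6\rceil=37>32=\ex_3^{(3)}(12;P)$ edges, so it is contained in a copy $K$ of $K_6^3\cup K_6^3$, which determines a balanced bipartition $V=U\cup W$. Now remove the $40$ edges of $K$: the remaining $180$ edges form the complete bipartite $3$-graph $B$, colored by the other five colors. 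The key structural observation is that any copy of $K_6^3\cup K_6^3$ on $V$ meets $B$ in at most $36$ edges; since $180/5=36$, each of the five remaining color classes contributes \emph{exactly} $36$ edges to $B$ and hence is itself contained in a copy of $K_6^3\cup K_6^3$. Each such copy is encoded by a pair of $3$--$3$ partitions of $U$ and of $W$, and two such copies are edge-disjoint on $B$ only if one of the two partitions is swapped; thus at most two of the five can be pairwise disjoint, a contradiction. Your sketch (``six such graphs cannot tile $K_{12}^3$'') points in the right direction, but the step you are missing is passing to the bipartite residue $B$ and the exact $36$-per-color count that forces all remaining classes into the $K_6^3\cup K_6^3$ mold.
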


 In the next section we  define Tur\'an numbers of the $s$-th order, $s\ge1$, as well as, conditional Tur\'an numbers, and state several  results about them with respect to the path $P$.
 Then in Section \ref{proofRam}, using some of these results,  we prove Theorem \ref{main}.
 The remaining sections  are all devoted to proving the Tur\'an-type theorems from Section \ref{turan}.

\section{Tur\'an numbers}\label{turan}

In this section, after providing some background, we define Tur\'an numbers of the $s$-th order as
well as conditional Tur\'an numbers, and formulate our results concerning such numbers for $P$, the
loose 3-uniform path of length 3. We begin by recalling the definition of the ordinary Tur\'an
number. Given a family of $k$-graphs $\mathcal F$, we call a $k$-graph $H$ \emph{$\mathcal F$-free}
if for all $F \in \mathcal F$ we have  $F \nsubseteq H$.
\begin{defin}
    \rm For a family of $k$-graphs $\mathcal F$ and an integer $n\ge1$, the \textit{ Tur\'an number (of the 1st order)} is defined as
    $$
    \mathrm{ex}^{(1)}_k(n; \mathcal F):=\mathrm{ex}_k(n;\mathcal F)=\max\{|E(H)|:|V(H)|=n\;\mbox{ and $H$ is
    $\mathcal F$-free}\}.
    $$
 Every $n$-vertex $\mathcal F$-free $k$-graph with $ex_k(n;\mathcal F)$
 edges is called  \emph{extremal (1-extremal) for~$\mathcal F$}.
  We denote by $\mathrm{Ex}_k(n;\mathcal F)=\mathrm{Ex}^{(1)}_k(n;\mathcal F)$
  the family of all $n$-vertex $k$-graphs which are extremal for $\mathcal F$.
\end{defin}
\noindent In the case when $\mathcal F=\{F\}$, we will often write $\mathrm{ex}_k(n;F)$ for
$\mathrm{ex}_k(n;\{F\})$ and $\mathrm{Ex}_k(n;F)$ for $\mathrm{Ex}_k(n;\{F\})$.

 The Tur\'an numbers for graphs have been harder to grasp in the case of bipartite $F$ than  when $\chi(F)\ge3$. For $k$-graphs, $k\ge3$, on the other hand, the $k$-partite case seems to be easier.
Indeed, the numbers $\mathrm{ex}_k(n;F)$ have been already computed for $F$ being a pair of disjoint edges, a loose path and a loose  cycle, while, e.g.,  $\mathrm{ex}_3(n;K_4^3)$ is still not known, even asymptotically. Interestingly, the three $k$-partite cases of $F$ mentioned above exhibit a whole lot of similarity.

A family ${F}$ of sets is called \emph{intersecting} if $e\cap e' \neq\emptyset$
 for all $e,e'\in {F}$. Obviously, a star is intersecting.  Restricting to $n$-vertex $k$-graphs, a celebrated result of Erd\H os, Ko, and Rado  asserts that for $n\ge2k+1$, the full star $S_n^k$ is, indeed, the unique largest intersecting family. Below, we formulate this result in terms of the Tur\'an numbers. Let $M_2^k$ be a $k$-graph consisting of two disjoint edges.
\begin{theorem}[\cite{EKR}]
    For $n\ge 2k$, $\mathrm{ex}_k(n;M^k_2)=\binom{n-1}{k-1}$. Moreover, for $n\ge2k+1$, \newline $\mathrm{Ex}_k(n;M^k_2)=\{S^k_n\}$.
\end{theorem}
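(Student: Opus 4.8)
The plan is to prove both parts by Katona's cyclic permutation (circle) method, together with a short analysis of the equality case. Observe first that an $n$-vertex $k$-graph $H$ is $M_2^k$-free exactly when its edge set is an \emph{intersecting} family of $k$-subsets of $V(H)$; hence $\mathrm{ex}_k(n;M_2^k)$ equals the maximum size of an intersecting family of $k$-subsets of $[n]$. The lower bound is then immediate: $S_n^k$ is intersecting (every edge contains the center) and has $\binom{n-1}{k-1}$ edges.

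For the upper bound, fix an intersecting family $\mathcal F$ of $k$-subsets of $[n]$ and consider the $(n-1)!$ cyclic orderings of $[n]$; call a $k$-set an \emph{arc} of an ordering if its elements occupy $k$ consecutive positions on the cycle. The crucial lemma is that \emph{for every cyclic ordering at most $k$ members of $\mathcal F$ are arcs}. Indeed, if $A=\{a_1,\dots,a_k\}$ (consecutive) is an arc in $\mathcal F$, then every arc $B\ne A$ meeting $A$ is, for some $i\in\{1,\dots,k-1\}$, either the arc starting at $a_{i+1}$ or the arc ending at $a_i$; the union of such a pair is a block of $2k$ consecutive positions, so since $n\ge 2k$ the two arcs are disjoint and $\mathcal F$, being intersecting, contains at most one of them. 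Counting $A$ itself gives at most $1+(k-1)=k$ arcs. Now double count the pairs $(\pi,A)$ with $\pi$ a cyclic ordering and $A\in\mathcal F$ an arc of $\pi$: every fixed $k$-set is an arc of exactly $k!\,(n-k)!$ cyclic orderings, so
$$|\mathcal F|\cdot k!\,(n-k)!\;=\;\sum_{\pi}\bigl|\{A\in\mathcal F:A\text{ an arc of }\pi\}\bigr|\;\le\;k\,(n-1)!\,,$$
which rearranges to $|\mathcal F|\le\binom{n-1}{k-1}$, establishing $\mathrm{ex}_k(n;M_2^k)=\binom{n-1}{k-1}$.

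For the uniqueness assertion, assume $n\ge 2k+1$ and that $\mathcal F$ is intersecting with $|\mathcal F|=\binom{n-1}{k-1}$. Then equality holds throughout the double count, so in \emph{every} cyclic ordering exactly $k$ members of $\mathcal F$ are arcs. I would next show that these $k$ arcs share a common vertex: one classifies which families of $k$ pairwise-intersecting arcs are consistent with the disjoint-pair structure of the lemma, and here the strict inequality $n\ge2k+1$ eliminates the degenerate ``wrap-around'' configurations, leaving only the fans $\{a_t,\dots,a_{t+k-1}\}$ with $t$ ranging over a length-$k$ window, all of which contain one fixed vertex. Finally, since any two members of $\mathcal F$ can be realized simultaneously as arcs of a suitable cyclic ordering, where they are forced to meet in that ordering's common vertex, a short propagation argument produces a single global vertex lying in every edge of $\mathcal F$; thus $\mathcal F\subseteq S_n^k$, and equality of cardinalities forces $\mathcal F=S_n^k$.

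The main obstacle is precisely this uniqueness part: the averaging argument yields, for each ordering in isolation, a set of $k$ pairwise-intersecting arcs, but converting that local information into one global center requires both a careful description of the admissible extremal fans — the step where $n\ge 2k+1$ rather than merely $n\ge 2k$ is essential, since for $n=2k$ uniqueness genuinely fails (e.g.\ $n=4$, $k=2$) — and a propagation across orderings. Should this route become cumbersome, the standard alternative is the shifting (compression) method: replace $\mathcal F$ by a left-compressed intersecting family of the same size, prove the bound and the structure for left-compressed families directly by induction on $n$, and then verify that the compression steps cannot turn a non-star into a full star, so that $\mathcal F$ itself must have been a full star.
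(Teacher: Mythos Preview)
The paper does not prove this statement; it is quoted as the Erd\H{o}s--Ko--Rado theorem with citation \cite{EKR}, used only as background. So there is no ``paper's own proof'' to compare against.

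On its own merits, your argument for the size bound is correct and is precisely Katona's circle method: the lemma that an intersecting family contains at most $k$ arcs of any cyclic order when $n\ge 2k$ is right, and the double count is clean. For the uniqueness part you have identified the genuine issue. The local step (that for $n\ge 2k+1$ the $k$ arcs in a given ordering must form a fan through one point) is fine, but the ``propagation across orderings'' you allude to is not just a short remark: the usual way to finish is to fix one set $A\in\mathcal F$, observe that for any $B\in\mathcal F$ there is a cyclic ordering in which both are arcs, deduce $|A\cap B|\ge 1$ with the intersection forced to lie in the fan's pivot, and then argue that as $B$ varies the pivot cannot move off $A$ --- this last step needs an explicit argument (e.g.\ via a third set or a connectivity argument among orderings), and as written your sketch does not supply it. Your fallback via shifting is sound and is in fact the more common route to the uniqueness statement; if you take it, the point to be careful about is exactly the one you flag, namely that a compression step $S_{ij}$ applied to a non-star cannot produce the full star $S_n^k$ (this uses $n\ge 2k+1$).
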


\emph{A loose cycle $C_m^k$} is defined in the same way as a loose path $P^k_m$, except that this
time also the first and the last edge share one vertex. When $k=m=3$ it is sometimes called \emph{a
triangle}. For convenience we abbreviate  our notation for triangles to $C:=C_3^3$. The Tur\'an
number $\ex_3(n;C)$ has been determined in \cite{FF} for $n\ge 75$ and later for all $n$ in
\cite{CK}.
\begin{theorem}[\cite{CK}]\label{c3}
    For $n\ge 6$, $\ex_3(n;C)=\binom{n-1}{2}$. Moreover, for $n\ge8$, \newline $\mathrm{Ex}_3(n;C)=\{S^3_n\}$.
\end{theorem}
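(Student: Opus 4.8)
The plan is to prove the lower bound by exhibiting the full star and the matching upper bound, together with the uniqueness statement, by induction on $n$.

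\textbf{Lower bound.} The three edges of $C$ have empty common intersection (each of its six vertices lies in at most two of them), so $C$ is not a subhypergraph of any star; in particular $S_n^3$ is $C$-free, whence $\ex_3(n;C)\ge\binom{n-1}{2}$. It remains to show $\ex_3(n;C)\le\binom{n-1}{2}$ for $n\ge6$, and that $S_n^3$ is the unique $1$-extremal $3$-graph for $n\ge8$. I would first dispose of the small cases $n=6,7$ by a direct finite analysis; there other extremal configurations may appear, which is exactly why uniqueness is asserted only from $n=8$ on.

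\textbf{The low-degree case.} Let $H$ be a $C$-free $3$-graph on $n\ge8$ vertices, put $m=|E(H)|$, and let $\delta$ denote its minimum degree. If $\delta\le n-2$, delete a vertex $v$ of degree $\delta$; then $H-v$ is $C$-free on $n-1$ vertices, so by induction $|E(H-v)|\le\binom{n-2}{2}$ and therefore $m\le\binom{n-2}{2}+(n-2)=\binom{n-1}{2}$. If equality holds then $d_H(v)=n-2$ and $H-v$ is $1$-extremal, hence (by the inductive uniqueness, for $n\ge9$; the value $n=8$ is a boundary case requiring extra care) $H-v=S_{n-1}^3$ with some center $c$. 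If some edge $\{v,x,y\}\in E(H)$ avoided $c$, then choosing two star edges $\{c,x,z\}$ and $\{c,y,w\}$ with $z\ne w$ and $z,w\notin\{v,x,y,c\}$ (possible since $n\ge8$) would yield a copy of $C$ on the six distinct vertices $v,x,y,c,z,w$ --- a contradiction; hence every edge of $H$ passes through $c$ and $H=S_n^3$.

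\textbf{The high-degree case, and the main obstacle.} It remains to treat $\delta\ge n-1$, which (since a full star has a vertex of degree $n-2$) should contribute no $1$-extremal graph. If $H$ is intersecting, then it is $M_2^3$-free, and the Erd\H os--Ko--Rado bound $\ex_3(n;M_2^3)=\binom{n-1}{2}$ (valid for $n\ge6$) gives $m\le\binom{n-1}{2}$, with equality impossible here because $S_n^3$ is the only extremal $M_2^3$-free graph for $n\ge7$. Otherwise $H$ contains two disjoint edges $e_1,e_2$, and this is the crux. The structural input I would extract from $C$-freeness is a tight control of the link graphs relative to edges avoiding their vertex: if $f=\{x_1,x_2,x_3\}\in E(H)$, $p\notin f$, and $\{p,x_i,q_i\}\in E(H)$ with $q_i\notin f$ for two indices $i$, and the $q_i$ are not forced to coincide in a single vertex, then these two edges together with $f$ already form a copy of $C$ (attach two distinct ``outside'' neighbors to two distinct vertices of $f$ and use $f$ as the third edge of the triangle). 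Consequently, for each vertex $p$, at most one vertex of $e_1$ (and of $e_2$) can be joined by an edge through $p$ to a vertex outside $e_1$ (resp. outside $e_2$), apart from a single shared exception. Applying this with $f=e_1$ and $f=e_2$, I would classify all edges of $H$ by the sizes of their intersections with $e_1$, with $e_2$, and with these exceptional vertices, bound the number of edges meeting $e_1\cup e_2$, and combine that with the inductive estimate $|E(H[V\setminus(e_1\cup e_2)])|\le\ex_3(n-6;C)$ to obtain $m\le\binom{n-1}{2}$, in fact strictly. I expect the real difficulty to lie precisely here: making these several link constraints simultaneously sharp enough to beat $\binom{n-1}{2}$, handling the $\le\binom{6}{3}$ edges inside $e_1\cup e_2$ and the edges with exactly one or two vertices there, and dealing with the boundary values of $n$. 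This is essentially the content of the cited works.
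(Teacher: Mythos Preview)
The paper does not prove this theorem; it is quoted from \cite{FF} (for large $n$) and \cite{CK} (for all $n\ge6$), so there is no in-paper proof to compare against.

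As to your attempt itself: it is a reasonable outline, but not a proof. You defer the base cases $n=6,7$ to an unspecified ``direct finite analysis'', and the uniqueness at $n=8$ to unspecified ``extra care'' (rightly so: at $n=7$ the star is not the unique extremal $C$-free $3$-graph, so the inductive uniqueness step genuinely breaks there). Most importantly, the high-degree non-intersecting case --- which you correctly identify as the crux --- is only sketched; you end by saying ``this is essentially the content of the cited works'', which concedes that the argument is incomplete. Your link-graph observation (that two edges $\{p,x_1,q_1\}$, $\{p,x_2,q_2\}$ with $q_1\neq q_2$ and $q_1,q_2\notin f$ close a triangle with $f=\{x_1,x_2,x_3\}$) is correct and is the right sort of local constraint, but converting it into a global bound strictly below $\binom{n-1}{2}$, while simultaneously controlling edges inside $e_1\cup e_2$, edges of each intersection type, and the recursive contribution of $H[V\setminus(e_1\cup e_2)]$, is a substantial case analysis that you have not carried out.

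For context, the proof in \cite{CK} is not a combinatorial induction of this kind at all: it proceeds via a homological/algebraic argument. So even a completed version of your sketch would constitute a genuinely different route; but as it stands, the hard case is missing.
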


Finally, we return to loose paths. For large $n$, the Tur\'an number for $P^k_m$ has been
determined for $k\ge4$ in \cite{FJS} and for $m\ge4$ in \cite{Kostochka}. In \cite{FJS} the authors
admitted that their method does not quite work for $k=3$, while  the authors of \cite{Kostochka}
credited \cite{FJS} with that case. In \cite{JPR}  we closed this gap. Given two $k$-graphs $F_1$
and $F_2$, by $F_1\cup F_2$ we denote a vertex-disjoint union of $F_1$ and $F_2$. Also, note that
$K_1^3$ is just an isolated vertex.

\begin{theorem}[\cite{JPR}]\label{ex1}
 $$\ex_3(n;P)=\left\{ \begin{array}{ll}
 \binom n3 & \textrm{ and $\quad Ex_3(n;P)=\{K^3_n\}\qquad\quad$\;\; for $n\le6,$ }\\
20 & \textrm{ and $\quad Ex_3(n;P)=\{K^3_6\cup K^3_1\}\quad$ for $n=7,$ }\\
\binom{n-1}{2} & \textrm{ and $\quad Ex_3(n;P)=\{S^3_n\}\qquad\quad$\;\; for $n\ge 8$.}
\end{array} \right.
$$
\end{theorem}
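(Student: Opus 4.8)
The plan is to split into the three ranges of $n$ and treat the small cases by a direct argument. For $n\le 6$ every $3$-graph has at most $\binom n3<3$ edges when $n\le4$ and otherwise cannot contain $P$ simply for lack of vertices: a copy of $P$ needs $7$ vertices, so $K^3_n$ itself is $P$-free, giving $\ex_3(n;P)=\binom n3$ with $K^3_n$ the unique extremal configuration. For $n=7$ the argument is the crux of the ``small'' regime: I would show that an $n=7$ vertex $P$-free $3$-graph $H$ with more than $20$ edges must exist on all $7$ vertices in an essential way, and then derive a copy of $P$. Here the natural approach is to assume $|E(H)|\ge 21=\binom73-14$, pick any vertex $v$ and look at its link graph and the $3$-graph $H-v$ on $6$ vertices; if $H-v$ is already dense one recurses, and otherwise $v$ has large degree and one threads a path through $v$. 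I expect this $n=7$ case to require a short but genuinely ad hoc case analysis, because $20=\binom62$ is exactly the size of $K^3_6\cup K^3_1$, so the extremal example is ``one vertex short'' and the bound is tight in a delicate way.

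For the main range $n\ge 8$, the target is $\ex_3(n;P)=\binom{n-1}2$ with $S^3_n$ the unique extremal $3$-graph. The lower bound is immediate since the full star $S^3_n$ is intersecting, hence $M^k_2$-free, hence $C$-free, and $P$ contains $M^3_2$ (the first and third edges are disjoint), so $P\not\subseteq S^3_n$; and $|E(S^3_n)|=\binom{n-1}2$. For the upper bound I would argue by induction on $n$, with the case $n=8$ (or perhaps $n=8,9$) as the base, to be handled by a finite, if tedious, structural analysis. For the inductive step, given a $P$-free $H$ on $n\ge 9$ vertices with $|E(H)|>\binom{n-1}2$, I would like to find a vertex $v$ of degree $d(v)\le n-2$; deleting it leaves a $P$-free $3$-graph on $n-1$ vertices with more than $\binom{n-1}2-(n-2)=\binom{n-2}2$ edges, so by induction $H-v\in\mathrm{Ex}_3(n-1;P)=\{S^3_{n-1}\}$, and then one only has to rule out that adding $v$ back, with its link, to a full star on $n-1$ vertices either creates a copy of $P$ or fails to be a star on $n$ vertices. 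The obstacle is the existence of the low-degree vertex: a priori every vertex could have degree $n-1$ or more, so I would first need a preliminary lemma showing that a $P$-free $3$-graph cannot be too regular, most plausibly by exploiting Theorem \ref{c3} — a $P$-free $3$-graph that is not a star should contain a triangle $C$, and a triangle plus any disjoint edge almost gives $P$, which forces strong local sparsity somewhere.

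The main difficulty, then, is the interface between ``$P$-free'' and the known results on $M^3_2$ and $C$. Since $P\supseteq M^3_2$ but $P\not\subseteq C$ and $C\not\subseteq P$, neither Theorem on $M^3_2$ nor Theorem \ref{c3} applies directly; a $P$-free $3$-graph may contain many disjoint pairs of edges and many triangles, as long as it avoids the specific ``three-edge loose path'' pattern. I expect the real work to be a careful analysis of how a triangle $C$ can sit inside a $P$-free $H$: once a triangle is present on vertices $\{x,y,z\}$ with the three ``rim'' vertices, any edge meeting the triangle in a controlled way and any far-away edge tend to complete a $P$, which should pin down that $H$ is essentially a star plus a bounded perturbation, and then a counting argument finishes. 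If the direct induction on a low-degree vertex proves too fragile, the fallback is a global stability argument: show any near-extremal $P$-free $H$ is close to $S^3_n$ in edit distance and then remove the last few non-star edges by hand using the explicit structure of $P$.
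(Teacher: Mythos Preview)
This theorem is not proved in the present paper: it is quoted from the companion article \cite{JPR} (note the citation in the theorem header), so there is no in-paper proof to compare your proposal against. What the current paper does contain are strong hints about the argument in \cite{JPR}, namely Theorem~\ref{second} and the two ``Facts'' recalled in the proof of Lemma~\ref{spojny}.

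Judged on its own, your proposal has a genuine gap in the main range $n\ge 8$. Your primary plan is induction via a vertex of degree at most $n-2$, and you correctly flag that you do not know how to produce such a vertex. Your proposed rescue --- ``a $P$-free $3$-graph that is not a star should contain a triangle $C$'' --- is false: the comet $\co(n)$ is $P$-free, $C$-free, not a star, and has $4+\binom{n-4}{2}$ edges (see Theorem~\ref{PCM}), so that lemma cannot hold and the low-degree-vertex induction has no obvious repair.

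The approach that actually works, and which you almost reach in your last paragraph, is to split on whether $H$ contains a triangle, not on vertex degrees. If $C\nsubseteq H$, Theorem~\ref{c3} gives $|H|\le\binom{n-1}{2}$ directly, with $S_n^3$ the unique extremal $3$-graph for $n\ge 8$. If $C\subseteq H$ on a $6$-set $U$, then (these are the Facts from \cite{JPR} quoted in the proof of Lemma~\ref{spojny}) every cross edge in $H(U,W)$ lies in a small explicit family $T$, and no cross edge meets an edge of $H[W]$; this yields the clean recursion $\ex_3(n;P\,|\,C)=20+\ex_3(n-6;P)$ of Theorem~\ref{second}, which is strictly below $\binom{n-1}{2}$ for all $n\ge 8$. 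So your instinct that ``the real work is a careful analysis of how a triangle sits inside a $P$-free $H$'' is exactly right --- but it is the whole proof, not a lemma feeding a degree-based induction.
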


It was proved in \cite{F} for large $n$ and in \cite{KMW} for all $n$ that for $k\ge4$ the Tur\'an number for $P^k_2$, or the maximum number of edges in a $k$-graph with no singleton intersection, is  $ex_k(n;P^k_2)=\binom{n-2}{k-2}$. In a couple of proofs we will need an easy analog of this result for $k=3$, first observed in \cite{KMW}.

\begin{fact}\label{p2}
    For $n\ge 1$, we have $ex_3(n;P_2^3)\le n$.
\end{fact}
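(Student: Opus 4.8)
The plan is to exploit the rigid structure of $P_2^3$-free $3$-graphs. First note that a $3$-graph $H$ is $P_2^3$-free if and only if any two of its edges meet in either $0$ or $2$ vertices: they cannot meet in exactly one vertex, and meeting in three vertices means they coincide. In particular, any two edges of $H$ sharing at least one vertex must share exactly two.

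Form the auxiliary graph $\mathcal{G}$ on the edge set $E(H)$ in which two edges are adjacent whenever they share (exactly) two vertices. By the previous remark, edges lying in different connected components of $\mathcal{G}$ are vertex-disjoint, so the vertex sets of the components of $\mathcal{G}$, together with the isolated vertices of $H$, partition $V(H)$. The core of the argument is a structural description of a component $\mathcal{C}$ of $\mathcal{G}$ having at least two edges: all edges of $\mathcal{C}$ pairwise meet in exactly two vertices, and moreover either (i) all of them contain one fixed pair of vertices $\{a,b\}$, or (ii) all of them lie inside one fixed $4$-element vertex set. I would establish this through a short finite case check. Starting from two edges $\{a,b,c\},\{a,b,d\}$, any further edge meeting each of them in two vertices is either again of the form $\{a,b,x\}$, or is $\{a,c,d\}$, or is $\{b,c,d\}$; thus, as long as no edge of the second type occurs we are in case (i), while once such an edge $f$ occurs, $P_2^3$-freeness rules out any edge of $\mathcal{C}$ using a vertex outside $\{a,b,c,d\}$ --- such an edge would meet one of the four triples spanned by $\{a,b,c,d\}$ in a single vertex --- putting us in case (ii). A routine induction on distance in $\mathcal{G}$ propagates the pairwise two-vertex intersection, and hence the dichotomy, from triples of edges to the whole component.

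The count is then immediate. A component of type (i) with $t$ edges spans exactly $t+2$ vertices; a component of type (ii) has at most $4$ edges and spans $4$ vertices; and a component with at most one edge, like an isolated vertex, spans at least as many vertices as it has edges. Hence every block of the partition of $V(H)$ contributes at least as many vertices as edges, and summing over all blocks gives $|E(H)|\le |V(H)|=n$.

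The main obstacle --- really the only non-mechanical point --- is the structural dichotomy: verifying that three pairwise two-intersecting triples either share a common pair or span exactly four vertices, and that in the latter case the component cannot reach beyond those four vertices. Everything afterwards is elementary bookkeeping.
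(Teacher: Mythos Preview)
Your argument is correct. The structural dichotomy for $P_2^3$-free components (either all edges through a common pair, or all edges inside a fixed $4$-set) is valid, and your induction on distance in $\mathcal{G}$ works because if $|e_i\cap e_{k+1}|=2$ and $|e_i\cap e_j|=2$, then $e_j$ and $e_{k+1}$ must share at least one vertex of $e_i$, hence (by $P_2^3$-freeness) exactly two vertices. The final count is sound.

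As for the comparison: the paper does not actually prove Fact~\ref{p2}. It is stated without proof, described as an ``easy analog'' of the $k\ge 4$ result and attributed to Keevash, Mubayi, and Wilson~\cite{KMW}. So there is no in-paper argument to compare against; your proposal supplies a complete elementary proof where the paper simply quotes the result.
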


\subsection{A hierarchy of Tur\'an numbers}

Tur\'an numbers of the 1st order are just the ordinary Tur\'an numbers defined above. Here we
introduce a hierarchy of Tur\'an numbers, where in each  generation we consider only $k$-graphs
which are not sub-$k$-graphs of extremal $k$-graphs from all previous generations. The next
definition is iterative.

\begin{defin}\rm  For a family of $k$-graphs $\mathcal F$ and  integers $s,n\ge1$,
 \textit{the Tur\'an number of the $(s+1)$-st order} is defined as
\begin{eqnarray*}
\mathrm{ex}^{(s+1)}_k(n;\mathcal F)=\max\{|E(H)|:|V(H)|=n,\; \mbox{$H$ is
    $\mathcal F$-free, and }\\
 \forall H'\in \mathrm{Ex}^{(1)}_k(n;\mathcal F)\cup...\cup\mathrm{Ex}^{(s)}_k(n;\mathcal F),  H\nsubseteq H'\},
\end{eqnarray*}
if such a $k$-graph $H$ exists.
An $n$-vertex $\mathcal F$-free $k$-graph $H$ is called \textit{(s+1)-extremal for} $\mathcal F$ if $|E(H)| = \mathrm{ex}^{(s+1)}_k(n;\mathcal F)$ and  $\forall H'\in \mathrm{Ex}^{(1)}_k(n;\mathcal F)\cup...\cup\mathrm{Ex}^{(s)}_k(n;\mathcal F),  H\nsubseteq H'$; we denote by $\mathrm{Ex}^{(s+1)}_k(n;\mathcal F)$ the family of $n$-vertex $k$-graphs which are $(s+1)$-extremal for $\mathcal F$.
\end{defin}

A historically first example of a Tur\'an number of the 2nd order is due to Hilton and Milner \cite{HM} who determined the maximum size of \emph{a nontrivial} intersecting family of $k$-sets, that is, one which is not a star. We state it here for $k=3$ only and suppress the family $\mathrm{Ex}^{(2)}_3(n;M_2^3)$ which was also found in \cite{HM}. Set $M:=M_2^3$ for convenience.

\begin{theorem}[\cite{HM}]\label{nti}
    For $n\ge6$, we have $\mathrm{ex}^{(2)}_3(n;M)=3n-8$.
\end{theorem}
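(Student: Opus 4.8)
The plan is to prove the upper bound $\mathrm{ex}^{(2)}_3(n;M) \le 3n-8$ by analyzing an arbitrary intersecting $3$-graph $H$ on $n$ vertices that is not contained in any full star $S_n^3$, and to complement it with an explicit construction meeting the bound. For the construction, fix a triple $T=\{x,y,z\}$ and a vertex $v\notin T$; let $H_0$ consist of $T$ together with all triples containing $v$ and meeting $T$. Then $|E(H_0)| = 1 + \binom{3}{1}(n-3) - \binom{3}{2}\cdot 0$... more carefully: triples through $v$ meeting $T$ in at least one vertex number $3(n-3) - 3(n-4)$... I would instead take the standard Hilton--Milner family: all triples containing $v$ that intersect $T$, plus $T$ itself, which has $1 + \big[\binom{n-2}{2} - \binom{n-5}{2}\big]$ edges; one checks this equals $3n-8$ and that it is intersecting and not a sub-$3$-graph of any star. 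This settles the lower bound and also shows the $(2)$-extremal number is well-defined.

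For the upper bound, let $H$ be intersecting with no universal vertex (no vertex in all edges), $|V(H)|=n\ge 6$. Pick any edge $e_0=\{a,b,c\}\in E(H)$. Since every other edge meets $e_0$, partition $E(H)\setminus\{e_0\}$ according to which of $a,b,c$ it contains (breaking ties arbitrarily), writing $E(H) = \{e_0\}\cup E_a\cup E_b\cup E_c$ where each edge of $E_x$ contains $x$. The key structural input is Fact \ref{p2}: for each $x\in\{a,b,c\}$, the link $L_x = \{f\setminus\{x\}: f\in E_x\cup\{e_0\}\}$ is a graph on $V(H)\setminus\{x\}$, and because $H$ is intersecting, $L_x$ cannot contain two disjoint edges unless... hmm, actually the cleaner route is: the $3$-graph $E_x\cup\{e_0\}$ restricted to edges through $x$ has the property that removing $x$ leaves a graph that is itself intersecting as a graph only in degenerate cases. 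The honest plan: since $H$ has no universal vertex, for each $x$ there is an edge $g_x\in E(H)$ with $x\notin g_x$; then every edge through $x$ must meet $g_x$, so the link graph $L_x$ has all its edges touching the $3$-set $g_x$, giving $|E_x| \le$ (number of pairs in $V\setminus\{x\}$ meeting $g_x$) but we must be careful since $g_x$ might intersect $e_0$.

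The main obstacle, as usual in Hilton--Milner, is controlling the overlap between the $E_x$'s and squeezing the count down to the exact value $3n-8$ rather than a weaker $O(n)$ bound: a crude "each link graph has edges covered by a bounded set" argument gives something like $3\cdot(3(n-1)) $, far too big. To get the sharp constant I would follow the covering-number dichotomy: either $\tau(H)=2$ (there are two vertices hitting every edge), in which case a direct two-variable count against the structure of $e_0$ yields exactly $3n-8$; or $\tau(H)\ge 3$, in which case one uses that $e_0$ itself together with a second well-chosen edge $e_1$ with $|e_0\cap e_1|=1$ forces every edge to meet a fixed $5$-element set in a constrained way, and a short case check (on how edges distribute among the at most $3\cdot 3=9$ "types" relative to $e_0$ and $e_1$, using intersectingness to kill most types and Fact \ref{p2} to bound the survivors linearly) gives a count strictly below $3n-8$ for $n\ge 6$. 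Equality analysis then pins down $\mathrm{Ex}^{(2)}_3(n;M)$, though the statement as quoted suppresses it, so I would only remark on the extremal configuration. Throughout, the small cases near $n=6$ must be checked by hand since the asymptotic counting arguments have slack there.
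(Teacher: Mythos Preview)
The paper does not prove Theorem~\ref{nti}; it is quoted verbatim from Hilton and Milner~\cite{HM} and used as a black box. There is therefore no ``paper's own proof'' to compare your proposal against.

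As for the proposal itself: it is a sketch in the right spirit (the Hilton--Milner family for the lower bound, a covering-number dichotomy for the upper bound), but it is not a proof. Two concrete issues. First, your displayed count $1+\bigl[\binom{n-2}{2}-\binom{n-5}{2}\bigr]$ for the extremal family is off; the number of triples through a fixed vertex $v$ that meet a fixed triple $T\not\ni v$ is $\binom{n-1}{2}-\binom{n-4}{2}=3n-9$, giving $3n-8$ with $T$ added, whereas your expression evaluates to $3n-11$. Second, the upper-bound argument is only gestured at: the $\tau(H)=2$ case and the $\tau(H)\ge 3$ case are each announced but not carried out, and the phrases ``a short case check'' and ``small cases near $n=6$ must be checked by hand'' are exactly where the work lies. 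The reference to Fact~\ref{p2} is also misplaced---that fact bounds the number of edges in a $P_2^3$-free $3$-graph, which is not what arises when you pass to link graphs of an intersecting family. If you want a self-contained proof, the cleanest route for $k=3$ is: fix $e_0\in H$; since $H$ is not a star, for each $x\in e_0$ pick $g_x\in H$ with $x\notin g_x$; every edge of $H$ meets both $e_0$ and each $g_x$, and a direct count of triples meeting $e_0$ and all three $g_x$'s (using that the $g_x$'s pairwise intersect and each meets $e_0$) yields the bound.
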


In this paper we prove the following two results which we then use to compute some Ramsey numbers
for $P$. First, we completely determine $\ex^{(2)}_3(n;P)$, together with the corresponding
2-extremal 3-graphs. \emph{A comet} $\co(n)$ is a 3-graph with $n$ vertices consisting of a copy of
$K_4^3$ to which a star $S_{n-3}^3$ is attached, the unique common vertex being the center of the
star (see Fig. \ref{FigR1}). This vertex is called \emph{the center} of the comet, while the set of
the remaining three vertices of the 4-clique is called \emph{the head}.

\bigskip
\begin{figure}[!ht]
\centering
\includegraphics [width=7cm]{FigR1.png}
\caption{The comet $\co(n)$} \label{FigR1}
\end{figure}

\begin{theorem}\label{ex2}
    $$\ex^{(2)}_3(n;P)=\left\{ \begin{array}{ll}
    15 &\textrm{and} \quad  \Ex^{(2)}_3(n;P)=\{S_7^3\}\hskip 4,1cm \textrm{for $n=7$},\\
    20 + \binom {n-6}3 & \textrm{and}  \quad \Ex^{(2)}_3(n;P)=\{K^3_6 \cup K^3_{n-6}\}\quad\qquad\quad \textrm{for $8 \le n\le 12$},\\
    40 & \textrm{and}  \quad \Ex^{(2)}_3(n;P)=\{K^3_6\cup K^3_6\cup K^3_1,\co(13)\} \quad\textrm{for }n=13,\\
    4 + \binom{n-4}{2}&\textrm{and}\quad\Ex^{(2)}_3(n;P)=\{\co(n)\}\hskip 3.4cm\textrm{for }n\ge 14.
    \end{array} \right.
    $$
\end{theorem}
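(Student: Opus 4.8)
The plan is to prove a general structural lemma first: if $H$ is an $n$-vertex $P$-free $3$-graph that is \emph{not} contained in any star, then $H$ must contain a copy of the triangle $C=C_3^3$ or else must be ``almost a star'' in a very restricted way. In fact, the key dichotomy to establish is the following. Since $H$ is not a subhypergraph of a star, the family of edges of $H$ is not intersecting, so $H$ contains a pair of disjoint edges, i.e.\ a copy of $M=M_2^3$; say these two edges span a $6$-set $W$. The first main step is to argue that, because $H$ is $P$-free, almost all edges of $H$ that meet $W$ must actually live \emph{inside} $W$ (otherwise one easily finds a path of length $3$ using the two disjoint edges in $W$ as the two ``end'' edges and a connecting edge outside), and symmetrically that the edges of $H$ disjoint from $W$ together with the few edges meeting $W$ in a controlled way form a $P$-free configuration of a simpler type. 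This localizes the extremal problem: the dominant contribution is $\binom{|W|}{3}=20$ from a $K_6^3$ on $W$, plus whatever can be packed on the remaining $n-6$ vertices.

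The second step is the recursion/compression on the leftover part. After fixing a disjoint pair spanning $W$, write $H = H[W] \cup H'$ where $H'$ collects the edges disjoint from $W$ (plus a bounded number of ``crossing'' edges that I will show contribute negligibly, using Fact~\ref{p2} and the $P$-freeness to bound them). The $3$-graph $H'$ on $\le n-6$ vertices is again $P$-free; if it is contained in a star we are in the ``comet-like'' regime, and if not, we can iterate the disjoint-pair argument. This yields a clean optimization: either $H$ essentially consists of two vertex-disjoint cliques $K_6^3$ (using $12$ of the vertices) with the remaining $n-12$ vertices forced to carry almost nothing once $n$ is not too large — giving the $20+\binom{n-6}{3}$ count for $8\le n\le12$ (the second $K_6^3$ can absorb up to $6$ of the $n-6$ leftover vertices), the mixed phenomenon at $n=13$ where a second disjoint $K_6^3$ and the comet $\co(13)$ tie at $40$, or — once $n\ge14$ — a single $K_4^3$ (the ``cheapest'' non-intersecting gadget) attached to a near-full star, i.e.\ the comet $\co(n)$ with $4+\binom{n-4}{2}$ edges. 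Throughout, Theorem~\ref{ex1} (the ordinary Tur\'an number $\binom{n-1}{2}$ for $n\ge8$) and Theorem~\ref{c3} are the tools used to bound the ``star-like'' pieces, and the definition of $\ex^{(2)}$ is invoked to justify that an extremal configuration not inside a star must beat the pure-star count, pinning down the crossover points.

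For the \emph{lower bound} (the ``$\ge$'' direction) one simply checks that each claimed extremal hypergraph is $P$-free and is not a subhypergraph of a star: $K_6^3\cup K_{n-6}^3$ contains no copy of $P$ because any copy of $P$ is connected and spans $7$ vertices, so it would have to lie entirely in one of the two components, but $K_6^3$ has only $6$ vertices and $K_{n-6}^3$ has fewer than $7$ vertices for $n-6\le6$; and $\co(n)$ is $P$-free because a copy of $P$ in $\co(n)$ would need to use two edges that are ``far apart,'' which the star part forbids (every two star edges meet), while the head $K_4^3$ cannot supply two of the three edges of $P$ without those two edges sharing two vertices — a routine case check. Counting edges then gives exactly the stated values, and for $n=13$ one verifies both $K_6^3\cup K_6^3\cup K_1^3$ and $\co(13)$ achieve $40$ while nothing does better.

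The main obstacle I anticipate is the \emph{crossing-edge bookkeeping} in the localization step: showing rigorously that edges meeting the fixed $6$-set $W$ in exactly one or two vertices can be essentially ignored (they number $O(n)$ at most, and any attempt to keep many of them forces a copy of $P$) requires a careful, slightly tedious case analysis on how such a crossing edge interacts with the two disjoint edges inside $W$ and with edges of the leftover part. A secondary delicate point is the exact determination of the thresholds $n=7$, $n=12$, and $n=13$: these are genuine boundary cases where two different families of the same size coexist (or a sporadic small configuration like $S_7^3$ itself wins because $\binom{n-6}{3}$ has not yet grown), so they will need to be handled by direct, hand computation rather than by the asymptotic argument, and getting the family $\Ex^{(2)}_3(n;P)$ exactly right at $n=13$ (two distinct extremal hypergraphs) is where the bulk of the careful work lies.
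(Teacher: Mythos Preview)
Your localization step around the $6$-set $W$ spanned by two disjoint edges is where the argument breaks. You claim the crossing edges (those meeting $W$ in one or two vertices) number $O(n)$ and can be ``essentially ignored,'' but this is false precisely for the comet, which is extremal for $n\ge 13$. In $\co(n)$, take the disjoint pair $\{h_1,h_2,h_3\}$ (the head, avoiding the center $c$) and any star edge $\{c,v_1,v_2\}$; then $W=\{h_1,h_2,h_3,c,v_1,v_2\}$, and every one of the $\binom{n-6}{2}$ star edges $\{c,v_i,v_j\}$ with $i,j\ge 3$ meets $W$ in exactly one vertex. So the crossing edges are $\Theta(n^2)$, not $O(n)$, and your decomposition $H = H[W]\cup H'$ with $H'$ living on $V\setminus W$ plus $O(n)$ extra simply does not see the comet: for $\co(n)$ we have $H[V\setminus W]=\emptyset$ and $|H[W]|=5$, so all the mass is in the crossing part. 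Your recursion would only recover $K_6^3\cup S_{n-6}^3$-type constructions with $20+\binom{n-7}{2}$ edges, which is strictly smaller than $4+\binom{n-4}{2}$ for large $n$.

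There is also a smaller gap at $n=7$: the $1$-extremal family there is $K_6^3\cup K_1^3$, not a star, so ``not contained in a star'' is the wrong constraint; you must exclude sub-$3$-graphs of $K_6^3\cup K_1^3$ instead, and $S_7^3$ itself becomes the $2$-extremal graph.

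The paper avoids your pitfall by a different dichotomy. For $n\ge 8$ it first disposes of intersecting non-stars via Hilton--Milner (Theorem~\ref{nti}), giving at most $3n-8$ edges, and then reduces everything else to computing $\ex_3(n;P\mid M)$ (Theorem~\ref{PM}). That number is in turn split according to whether $H$ contains a triangle $C$: if yes, Corollary~\ref{coro} gives the $K_6^3$-based bound; if no, Theorem~\ref{PCM} is proved by localizing around the $5$-set $U=e_1\cup e_2$ of a copy of $P_2^3$ (not the $6$-set of an $M$), where a much finer degree analysis controls the edges between $U$ and the rest. The comet emerges naturally in this $\{P,C\}$-free analysis. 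For $n=7$, the paper uses a separate connectedness argument (Lemma~\ref{spojny}) together with Theorem~\ref{c3}.
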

\noindent Note that for $n\le 6$ this number is not defined, since each 3-graph is a sub-3-graph
of~$K_n^3$. Then, we calculate the 3rd Tur\'an number for $P$, but only for $n=12$ which is,
however, just enough for our application.
\begin{theorem}\label{ex3}
    $$\ex^{(3)}_3(12;P)=32 \quad\text{and} \quad \Ex^{(3)}_3(12;P)=\{\co(12)\}.$$
\end{theorem}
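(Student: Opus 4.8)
The plan is to combine Theorem \ref{ex2} with a direct structural analysis of $12$-vertex $P$-free $3$-graphs. By the hierarchy of Tur\'an numbers, a $3$-extremal $3$-graph $H$ on $n=12$ vertices must be $P$-free, must not be a sub-$3$-graph of the full star $S_{12}^3$ (the unique $1$-extremal $3$-graph, by Theorem \ref{ex1}), and must not be a sub-$3$-graph of $K_6^3\cup K_6^3$ (the unique $2$-extremal $3$-graph for $n=12$, by Theorem \ref{ex2}). We already know $\co(12)$ is $P$-free with $\binom{8}{2}+4 = 32$ edges, it is not contained in a star, and it is not contained in $K_6^3\cup K_6^3$ (its star part alone spans $9$ vertices), so $\ex^{(3)}_3(12;P)\ge 32$. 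The task is to prove the matching upper bound and uniqueness.

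First I would invoke the known structure theory of $P$-free $3$-graphs. A $P$-free $3$-graph has no loose path on three edges, so its ``link'' structure is very constrained; in particular one expects that in a $P$-free $3$-graph one can find a small set of vertices meeting ``most'' edges, or else the graph decomposes. The cleanest route is: fix a $P$-free $H$ on $12$ vertices with $|E(H)| \ge 33$ (aiming for a contradiction) which is not inside a star and not inside $K_6^3 \cup K_6^3$. Since $|E(H)| > \binom{11}{2} = 55$? — no, $33 < 55$, so the $1$st-order bound does not immediately apply; the point is precisely that $H$ avoids being a star, so I must use the refined extremal results. I would consider a longest loose path in $H$. Since $H$ is $P$-free it contains no $P^3_3$, so every loose path in $H$ has at most two edges. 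Analyzing a maximal loose path $P^3_2 = \{x,y,z\},\{z,u,w\}$ (if $H$ has two intersecting edges at all) and how all other edges must interact with it — they cannot extend it, which forces them to hit $\{x,y,z,u,w\}$ in a restricted way — should yield that either $H$ is intersecting, or $H$ has a dominating vertex, or $H$ splits off a $K^3$ on few vertices. In the intersecting case, Theorem \ref{nti} (Hilton–Milner, here with $M = M_2^3$, noting that $P$-free intersecting forces more) bounds $|E(H)|$ well below $33$ unless $H$ is a star, which is excluded.

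The main work, and the expected main obstacle, is the case analysis in the ``almost intersecting'' regime: $H$ is not intersecting, so it contains a pair of disjoint edges, yet it is $P$-free. I would partition the vertex set according to a fixed pair of disjoint edges $e_1, e_2$ and carefully bound the number of edges of each type (those inside $e_1\cup e_2$, those using two vertices of $e_1\cup e_2$, those using one, those using none), exploiting repeatedly that no loose path of length three appears and that $\ex_3(m;P_2^3)\le m$ (Fact \ref{p2}) controls the $P_2^3$-free remainder on the untouched vertices. The tight configurations that survive this counting should be exactly $\co(12)$ (from Theorem \ref{ex2}'s $n=13$ entry one sees comets are the natural extremal objects once stars are forbidden) and $K_6^3\cup K_6^3$ plus possibly $K_6^3\cup K_6^3\cup K_1^3$-type pieces — but all of those are sub-$3$-graphs of $K_6^3\cup K_6^3$ and hence excluded from the $3$rd generation. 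Pinning down that nothing with $\ge 33$ edges escapes, and that the unique $32$-edge survivor is $\co(12)$, is the delicate part; here I would lean heavily on the $n\le 12$ and $n=13$ cases of Theorem \ref{ex2} applied to induced sub-$3$-graphs of $H$ on fewer vertices, since deleting a vertex from $H$ yields an $11$-vertex $P$-free $3$-graph whose size is controlled either by $\ex_3(11;P)=\binom{10}{2}=45$ or, if it avoids stars, by $\ex^{(2)}_3(11;P) = 20 + \binom{5}{3} = 30$. A short averaging/deletion argument combining these bounds with the assumption $|E(H)|\ge 33$ should close the gap and force $H \subseteq S_{12}^3$ or $H\subseteq K_6^3\cup K_6^3$, contradicting $3$-extremality, thereby giving $\ex^{(3)}_3(12;P)\le 32$ with $\co(12)$ the unique extremizer.
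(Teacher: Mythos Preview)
Your outline is right in broad strokes: the lower bound via $\co(12)$ is correct, and the intersecting case is handled cleanly by Theorem~\ref{nti}, giving $|H|\le 3\cdot 12-8=28$. The gap is in the non-intersecting case. First, the averaging/deletion argument you propose does not close: if $|H|\ge 33$, the minimum degree is at most $8$, so deleting such a vertex leaves at least $25$ edges; but $\ex^{(2)}_3(11;P)=30$, so there is no contradiction when $H-v\nsubseteq S^3_{11}$, and when $H-v\subseteq S^3_{11}$ you have not said how to finish (nor is it short---this is exactly where the comet structure has to be extracted). Second, and more fundamentally, the direct analysis around a fixed pair of disjoint edges that you sketch is essentially the content of Theorem~\ref{PCM}, whose proof occupies an entire section---and that theorem only applies once one has separately disposed of the case $C\subseteq H$. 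You never isolate the triangle case, and without doing so the analysis around $e_1\cup e_2$ must simultaneously cope with $K_6^3$-like configurations and comet-like configurations, which is what makes it unmanageable.

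The paper's proof is short because it rests on a case split you do not make. After the intersecting case, assume $M\subseteq H$. Then: (i) if $H$ is disconnected and $H\nsubseteq K_6^3\cup K_6^3$, a brief check over the possible component sizes, using $\ex_3(m;P)$ from Theorem~\ref{ex1} together with $\ex_3(m;P|M)$ from Theorem~\ref{PM}, gives $|H|\le 30$; (ii) if $H$ is connected and $C\subseteq H$, Lemma~\ref{spojny} gives $|H|\le 3\cdot12-8=28$; (iii) if $H$ is $C$-free but $M\subseteq H$, Theorem~\ref{PCM} gives $|H|\le 4+\binom{8}{2}=32$ with equality only for $\co(12)$. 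The triangle/no-triangle dichotomy in the connected case, together with the separate disconnected analysis via Theorem~\ref{PM}, is the missing idea.
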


\subsection{Conditional Tur\'an numbers}
To determine the Tur\'an numbers of higher order, it is sometimes useful to rely on Theorem \ref{nti} and divide all 3-graphs into those which contain $M$ and those which do not. This leads us quickly to another variation on Tur\'an numbers.
\begin{defin}\rm
     For a family of $k$-graphs $\mathcal F$, a family of $\mathcal F$-free $k$-graphs $\mathcal G$, and an integer $n\ge \min\{|V(G)|:G\in \mathcal G\}$, the  \textit{conditional Tur\'an number} is defined as
     \begin{eqnarray*}
        \ex_k(n;\mathcal F|\mathcal G)=\max\{|E(H)|:|V(H)|=n,\;
      \mbox{$H$ is
    $\mathcal F$-free, and }\exists G\in\mathcal G:\; H\supseteq G \}
     \end{eqnarray*}
     Every $n$-vertex $\mathcal F$-free $k$-graph with $\ex_k(n;\mathcal F|\mathcal G)$ edges and such
     that $H\supseteq G$ for some $G\in\mathcal G$ is called \emph{$\mathcal G$-extremal for $\mathcal F$}.
     We denote by $\Ex_k(n;\mathcal F|\mathcal G)$ the family of all \newline $n$-vertex $k$-graphs which are $\mathcal G$-extremal for $\mathcal F$.
 (If $\mathcal F=\{F\}$ or $\mathcal G=\{G\}$, we will simply write
$\ex_k(n;F| \mathcal G)$, $\ex_k(n;\mathcal F|G)$, $\ex_k(n;F|G)$, $\Ex_k(n;F| \mathcal G)$, $\Ex_k(n;\mathcal F|G)$, or $\Ex_k(n;F|G)$, respectively.)
\end{defin}

In \cite{JPR} we determined  $\ex_3(n;P|C)$ in terms of the ordinary Tur\'an numbers $\ex_3(n;P)$.

\begin{theorem}[\cite{JPR}]\label{second}
For $n\ge6$,
$$\ex_3(n;P|C)=20+\ex_3(n-6;P).$$
Moreover, $Ex_3(n;P|C)=\{K_6^3\cup H_{n-6}\}$, where $Ex_3(n-6;P)=\{H_{n-6}\}$, that is, $H_{n-6}$ is the unique extremal $P$-free 3-graph on
$n-6$ vertices (cf. Theorem \ref{ex1}).
\end{theorem}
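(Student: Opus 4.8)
The plan is to prove the two inequalities $\ex_3(n;P\mid C)\ge 20+\ex_3(n-6;P)$ and $\ex_3(n;P\mid C)\le 20+\ex_3(n-6;P)$ separately, and then extract the structure of the extremal hypergraphs from the proof of the upper bound. For the lower bound, I would simply exhibit the $3$-graph $K_6^3\cup H_{n-6}$, where $H_{n-6}$ is the (by Theorem \ref{ex1} essentially unique, for $n-6\ge 1$) extremal $P$-free $3$-graph on $n-6$ vertices. This $3$-graph contains a triangle $C$ (inside the $K_6^3$ part, since $K_6^3$ certainly contains a loose triangle), it has exactly $20+\ex_3(n-6;P)$ edges, and it is $P$-free: any copy of $P$ has a connected ``spine'' that would have to lie entirely in one of the two vertex-disjoint components, but $K_6^3$ on $6$ vertices has no room for $P$ (which needs $7$ vertices) and $H_{n-6}$ is $P$-free by definition. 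So $K_6^3\cup H_{n-6}$ witnesses the lower bound.

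The core of the argument is the upper bound. Let $H$ be $P$-free on $n$ vertices with $H\supseteq C$ for some loose triangle $C$, and let $V(C)=\{v_1,\dots,v_6\}$ be the six vertices of the triangle. The idea is that, because $H$ already contains the connected ``obstruction'' $C$, any additional edge of $H$ that reaches out from $V(C)$ into the remaining $n-6$ vertices threatens to extend the triangle into a copy of $P$ (a triangle with a pendant path of one or two edges attached contains $P^3_3$). First I would classify edges of $H$ by how many vertices they have in $V(C)$: edges entirely inside $V(C)$ (at most $\binom 63=20$ of them, trivially), edges entirely outside $V(C)$ (these form a $P$-free $3$-graph on the $n-6$ outside vertices, hence at most $\ex_3(n-6;P)$ of them), and ``crossing'' edges with $1$ or $2$ vertices in $V(C)$. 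The heart of the matter is to show that crossing edges cannot exist at all: I would argue that a single crossing edge $e$, together with suitable edges of the triangle $C$, already builds a copy of $P$ — one routes a loose path of length $2$ or $3$ through two non-adjacent edges of the triangle and then out along $e$ (and, if $|e\cap V(C)|=2$, one uses the flexibility in choosing which edge of $C$ to attach $e$ to). This is where one must be careful: the loose triangle has three edges pairwise sharing single vertices, and one needs to check all the cases of where $e$ attaches, but in each case two edges of $C$ plus $e$ give a loose path of length $3$. Once crossing edges are excluded, $E(H)$ splits as (edges inside $V(C)$) $\sqcup$ (edges outside $V(C)$), giving $|E(H)|\le 20+\ex_3(n-6;P)$.

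Given the upper bound is tight and its proof forces every extremal $H$ to decompose as $H[V(C)]\cup H[V\setminus V(C)]$ with no crossing edges, the structural statement follows: $H[V\setminus V(C)]$ must itself be extremal $P$-free on $n-6$ vertices, so $H[V\setminus V(C)]=H_{n-6}$ by Theorem \ref{ex1} (uniqueness of the extremal configuration — noting the mild care needed at $n-6=7$, where $\ex_3$ drops and the extremal family is $\{K_6^3\cup K_1^3\}$, which is still a single configuration); and $H[V(C)]$ must have all $\binom 63=20$ edges, i.e. be $K_6^3$, which indeed contains a triangle. Hence $\Ex_3(n;P\mid C)=\{K_6^3\cup H_{n-6}\}$. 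The main obstacle is the case analysis ruling out crossing edges — specifically making sure that every way a $1$- or $2$-vertex overlap edge can sit relative to the three edges of the loose triangle genuinely completes a copy of $P^3_3$, and handling the $7$-vertex boundary case where $P$ just barely fits, so that a crossing edge plus two triangle edges is not accidentally short of the $7$ vertices $P$ requires.
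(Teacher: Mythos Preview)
Your lower bound and the overall decomposition $|H|=|H[V(C)]|+|H(U,W)|+|H[W]|$ are fine, but the claimed ``heart of the matter''---that \emph{every} crossing edge together with two edges of the triangle yields a copy of $P$---is false, and this is a genuine gap. Label the triangle so that $V(C)=\{x_1,x_2,x_3,y_1,y_2,y_3\}$ with edges $\{x_i,y_j,x_k\}$, $\{i,j,k\}=\{1,2,3\}$. Then, for any $w\notin V(C)$, the crossing edge $\{x_1,x_2,w\}$ meets each of the three triangle edges, and for any pair of triangle edges forming a $P^3_2$ this crossing edge intersects \emph{both} of them; so it can never serve as a third edge disjoint from the far end of that $P^3_2$. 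The same check shows $\{x_i,y_i,w\}$ is also safe. Hence six types of crossing edges survive, and your case analysis cannot be completed as stated.

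The actual argument (from \cite{JPR}, whose key facts are quoted in the present paper inside the proof of Lemma~\ref{spojny}) proceeds in two further steps. First, one shows that the surviving crossing edges are exactly those in $T=T_1\cup T_2$ described above (this is Fact~1 there). Second---and this is the decisive structural point you are missing---one shows that no crossing edge can meet an edge of $H[W]$, since such a pair together with a suitable triangle edge would form $P$ (Fact~2). Consequently, if any crossing edge is present, the $W$-vertices it uses are isolated in $H[W]$; one then compares the gain from crossing edges against the loss in $|H[U]|$ (certain triples of $\binom{V(C)}{3}$ become forbidden once a crossing edge is present) and in $|H[W]|$ (isolated vertices), and concludes that the maximum is attained only with no crossing edges at all, i.e.\ at $K_6^3\cup H_{n-6}$. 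Your proposal needs this trade-off analysis; the blanket exclusion of crossing edges does not hold.
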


Theorem \ref{second}, combined with Theorem \ref{ex1}, yields immediately explicit values of
$ex_3(n;P|C)$ along with the extremal sets $Ex_3(n;P|C)$.

\begin{cor}\label{coro}
$$
\ex_3(n;P|C)=\left\{ \begin{array}{ll}
20 + \binom {n-6}3, & \textrm{and}\quad \Ex_3(n;P|C)=\{K^3_6 \cup K^3_{n-6}\}\quad\textrm{for } 6 \le n\le 12,\\
40, & \textrm{and}\quad\Ex_3(n;P|C)=\{K^3_6 \cup K^3_6\cup K_1^3\} \quad\; \textrm{for }n=13,\\
20 + \binom{n-7}{2}, & \textrm{and}\quad \Ex_3(n;P|C)=\{K^3_6\cup S^3_{n-6}\} \qquad\quad \textrm{for } n\ge 14.
\end{array} \right.
$$
\end{cor}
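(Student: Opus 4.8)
The plan is to simply combine the two previously stated results: Theorem~\ref{second}, which gives $\ex_3(n;P|C)=20+\ex_3(n-6;P)$ together with $\Ex_3(n;P|C)=\{K_6^3\cup H_{n-6}\}$ where $\{H_{n-6}\}=\Ex_3(n-6;P)$, and Theorem~\ref{ex1}, which lists $\ex_3(m;P)$ and the unique extremal 3-graph $H_m$ for every $m$. First I would note that the hypothesis $n\ge6$ of Theorem~\ref{second} is exactly the range in which the conditional number is being claimed (a copy of $C$ needs $6$ vertices, so $\ex_3(n;P|C)$ is undefined for $n<6$), so nothing is lost. Then it only remains to substitute $m=n-6$ into Theorem~\ref{ex1} and read off the three regimes.

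Concretely, I would split into the three cases dictated by Theorem~\ref{ex1}. When $0\le n-6\le6$, i.e. $6\le n\le12$, Theorem~\ref{ex1} gives $\ex_3(n-6;P)=\binom{n-6}3$ and $H_{n-6}=K_{n-6}^3$, so $\ex_3(n;P|C)=20+\binom{n-6}3$ with extremal family $\{K_6^3\cup K_{n-6}^3\}$. When $n-6=7$, i.e. $n=13$, we get $\ex_3(7;P)=20$ and $H_7=K_6^3\cup K_1^3$, hence $\ex_3(13;P|C)=40$ with extremal family $\{K_6^3\cup K_6^3\cup K_1^3\}$. When $n-6\ge8$, i.e. $n\ge14$, we have $\ex_3(n-6;P)=\binom{n-7}2$ and $H_{n-6}=S_{n-6}^3$, so $\ex_3(n;P|C)=20+\binom{n-7}2$ with extremal family $\{K_6^3\cup S_{n-6}^3\}$. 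Adding $20$ to each value and juxtaposing $K_6^3$ with each extremal 3-graph is all the bookkeeping involved.

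The one place I would be slightly careful is the lower endpoint $n=6$, where $n-6=0$: here $K_{n-6}^3=K_0^3$ is the empty 3-graph on no vertices, $\binom03=0$, and the asserted value $\ex_3(6;P|C)=20$ is correct since $K_6^3$ is $P$-free, contains $C$, and $\binom63=20$; so the formula and the extremal family $\{K_6^3\}$ are consistent with the $6\le n\le12$ line. (One should also observe that Theorem~\ref{ex1} covers $m=0$ only trivially, but there $\ex_3(0;P)=0$ and the empty 3-graph is the unique extremal object, so everything still matches.) Honestly, I expect there to be no real obstacle here: this is a pure substitution, and the only ``work'' is checking that the three ranges of $n$ line up exactly with the three cases of Theorem~\ref{ex1} shifted by $6$.
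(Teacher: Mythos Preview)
Your proposal is correct and follows exactly the paper's own approach: the paper does not give a separate proof of this corollary but simply states that it ``follows immediately'' from Theorems~\ref{second} and~\ref{ex1}, which is precisely the substitution you carry out. Your extra care at the endpoint $n=6$ is a nice sanity check but not strictly needed.
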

Our next result reveals that the conditional Tur\'an number $\ex(n;P|C)$ drops significantly if we restrict ourselves  to connected 3-graphs only.
  A 3-graph $H=(V(H),E(H))$ is \emph{connected} if for every bipartition of the set of vertices $V(H)=V_1\cup V_2$, $V_1\neq \emptyset$, $V_2\neq \emptyset$, there exists an edge $h\in E(H)$ such that $h\cap V_1\neq\emptyset$ and $h\cap V_2\neq\emptyset$.

\begin{lemma}\label{spojny}
    If $H$ is a connected $P$-free 3-graph with $n\ge 7$ vertices and $H\supset C$, then
    $$
    |E(H)| \le 3n - 8.
    $$
\end{lemma}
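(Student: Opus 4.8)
\medskip

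The plan is to exploit connectivity to control how the copy of $C$ inside $H$ interacts with the rest of the 3-graph. Write $C$ for a fixed triangle in $H$, on vertex set $T=\{t_1,\dots,t_6\}$ (the six vertices of $C_3^3$, with three of them being the ``link'' vertices of degree $2$ in $C$ and three the ``end'' vertices of degree $1$), and set $W=V(H)\setminus T$, $|W|=n-6$. First I would record the obvious structural constraints coming from $P$-freeness: since $P$ has only three edges forming a loose path, a $P$-free $H$ cannot contain, attached to $C$, any configuration that would complete a loose path of length $3$; in particular, edges meeting $W$ must be severely limited. I expect the argument to split according to how edges distribute among the blocks $T$ and $W$: edges entirely inside $T$ (at most $\binom63=20$, but in fact far fewer once one forbids a loose $P$ using two of them together with a $C$-edge), edges entirely inside $W$, and ``crossing'' edges with $1$ or $2$ vertices in $T$.

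\medskip

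The key steps, in order, would be: (1) Show that $H[W]$, the sub-3-graph induced on $W$, must itself be $P_2^3$-free --- indeed, two disjoint edges inside $W$, say $f_1,f_2$, together with a suitable edge of $H$ joining $f_1$ to $f_2$ (which must exist, or at least a short path must exist, by an appropriate use of connectivity) would build a copy of $P$; more carefully, one shows that if $H[W]$ contained a copy of $P_2^3$ then, combined with the triangle $C$ and the connectivity of $H$, one could extract a loose path of length $3$. By Fact \ref{p2}, this gives $|E(H[W])|\le n-6$. (2) Bound the number of crossing edges. Here connectivity is used in the opposite direction: because $H$ is connected, $W$ is ``glued'' to $T$, but $P$-freeness forbids too rich a gluing. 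One argues that each vertex of $W$ lies in only boundedly many crossing edges, or that the crossing edges form a structure (roughly, a union of stars centered in $T$) that is itself $P$-limited; the target here is something like $\le 2(n-6)$ crossing edges, or a combined bound with step (1). (3) Bound $|E(H[T])|$ by a constant: once $C\subseteq H[T]$, no edge of $H[T]$ disjoint from... etc., can be added without creating $P$, and a short case check on the $6$-vertex 3-graph gives $|E(H[T])|\le c$ for an explicit small constant. (4) Add the three bounds and check the arithmetic closes at $3n-8$; since $3n-8 = 3(n-6)+10$, we need the constant-plus-crossing contribution to total at most $10 + 2(n-6)$, i.e. step (3) should deliver the ``$10$'' and step (2) the linear slack $2(n-6)$, with step (1) absorbed inside. (One can also recognize $3n-8$ as the Hilton--Milner bound $\ex^{(2)}_3(n;M)$ of Theorem \ref{nti}, which strongly suggests the extremal connected example is the comet $\co(n)$; this is a useful sanity check and may even allow a slicker route via $M$.)

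\medskip

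The main obstacle I anticipate is step (2): quantifying exactly how connectivity forces crossing edges to be sparse, while a disconnected example (like $K_6^3\cup S^3_{n-6}$ from Corollary \ref{coro}, which has $20+\binom{n-7}{2}\gg 3n-8$ edges for large $n$) shows the bound is genuinely false without the connectivity hypothesis. Concretely, one must rule out a vertex $w\in W$ lying in many crossing pairs $\{w\}\cup\{t,t'\}$ with $t,t'\in T$ --- such a ``book'' at $w$ is itself $P$-free and $C$-free, so the obstruction to having many of them comes only from the simultaneous presence of $C$ and of a path from $w$ back to $T$ through $W$, and threading these together into a length-$3$ loose path requires a careful choice of which edges to use. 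I would handle this by a clean case analysis on the number of vertices of $W$ incident to crossing edges and on whether those crossing edges reuse link vertices or end vertices of $C$, each case either immediately producing a $P$ or yielding the desired edge count. A secondary, more routine obstacle is verifying that combining edges inside $T$ with crossing edges (not just with $C$) does not create $P$ either, which tightens the constant in step (3); this is a finite check but needs to be done with care to land exactly on $3n-8$ rather than something weaker.
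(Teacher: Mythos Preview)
Your decomposition into $T$ and $W$ matches the paper's, but the proposed bounds in steps (1)--(3) do not hold individually, and two key structural facts are missing.

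First, connectivity gives much more than you claim: $H[W]=\emptyset$. The reason is that every crossing edge must have exactly two vertices in $T$, and those two vertices must form one of six specific pairs (the three $\{x_i,x_j\}$ and the three $\{x_i,y_i\}$, where the $x_i$ are the degree-$2$ vertices of $C$); any other placement creates a copy of $P$ using two edges of $C$. Consequently every crossing edge already extends an edge of $C$ to a copy of $P_2^3$, so if it touched any edge of $H[W]$ a copy of $P$ would result. Since $n\ge 7$ forces $W\neq\emptyset$, connectivity then kills $H[W]$ entirely. (Incidentally, your step (1) conflates $P_2^3$ with a pair of disjoint edges.)

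Second, your target of $2(n-6)$ in step (2) is false: in the Hilton--Milner family built over a fixed vertex $x_1$ and a fixed triple $\{a,b,c\}$ one can choose the copy of $C$ so that $\{a,b,c\}\subset T$, and then there are $3(n-6)$ crossing edges (namely all $\{x_1,a,w\},\{x_1,b,w\},\{x_1,c,w\}$) while $|H[T]|=10$. So the additive scheme (1)$+$(2)$+$(3) with your allocations cannot close. The paper instead splits on whether $H$ is intersecting. If it is, then $C\subset H$ makes it \emph{nontrivially} intersecting and Theorem~\ref{nti} gives $|H|\le 3n-8$ outright; this is the tight case and is where the bound $3n-8$ actually comes from. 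If $H$ contains two disjoint edges, a short argument forces the crossing edges into a single pencil $\{\{x_1,x_2,w\}:w\in W\}$ and forbids nine specific triples in $T$, yielding $|H|\le (n-6)+11<3n-8$.

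Finally, your sanity check is off: the comet $\co(n)$ contains no copy of $C_3^3$ (its $K_4^3$ has only four vertices, and no mixture of star edges and clique edges yields a loose triangle), so it is not even a candidate here. The extremal examples for this lemma are the Hilton--Milner families.
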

\noindent It is not a coincidence that in Lemma \ref{spojny} and Theorem \ref{nti} we see the same
extremal number $3n-8$. In fact, we prove Lemma \ref{spojny} (see Section \ref{proofspojny}) by
showing that the extremal 3-graph forms a nontrivial intersecting family.

Already in \cite{JPR} we observed that, as a consequence of Theorem \ref{nti},
$$\ex^{(2)}_3(n;P)=\ex_3(n;P|M)\quad\mbox{ and }\quad \ex^{(2)}_3(n;C)=ex_3(n;C|M),$$
except for some very small values of $n$. We also found constructions yielding lower bounds and
conjectured that these bounds are, indeed, the true values (see also Section \ref{final}). In this
paper we confirm one of these conjectures.

\begin{theorem} \label{PM}
    $$
    \mathrm{ex}_3(n;P|M)=\left\{ \begin{array}{ll}
    20 + \binom {n-6}3 & \textrm{and} \quad \Ex_3(n;P|M)=\{K^3_6 \cup K^3_{n-6}\}\quad\quad\quad \textrm{ for }6 \le n\le 12,\\
    40& \textrm{and}\quad \Ex_3(n;P|M)=\{K^3_6\cup K^3_6\cup K^3_1, \co(13)\}\textrm{ for } n=13,\\
    4 + \binom{n-4}{2} &\text{and} \quad \Ex_3(n;P|M)=\{\co(n)\}\qquad\qquad\qquad\quad\;\textrm{for } n\ge 14.
    \end{array} \right.
    $$
\end{theorem}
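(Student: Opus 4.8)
Fix an $n$-vertex $P$-free $3$-graph $H$ with $M\subseteq H$, and let $e_1,e_2$ be disjoint edges of $H$; the plan is to bound $|E(H)|$ through a three-way split. \emph{Case I: $H$ contains a loose triangle $C$.} Here Corollary \ref{coro} applies verbatim and gives $|E(H)|\le\ex_3(n;P|C)$, which equals $20+\binom{n-6}3$ for $6\le n\le12$, equals $40$ for $n=13$, and equals $20+\binom{n-7}2$ for $n\ge14$. For $n\le13$ this is exactly the value claimed in Theorem \ref{PM}, with $C$-containing extremal $3$-graphs $K_6^3\cup K_{n-6}^3$ (respectively $K_6^3\cup K_6^3\cup K_1^3$ for $n=13$); for $n\ge14$ a one-line computation shows $20+\binom{n-7}2<4+\binom{n-4}2$, so this case is not extremal. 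In particular $n=6$ is already settled, since $K_6^3$ contains a loose triangle.

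\emph{Case II: $H$ is $C$-free and disconnected.} Write $H$ as the disjoint union of its components $H_1,\dots,H_t$, with $e_1\in E(H_1)$ and $e_2\in E(H_j)$. Each $H_i$ is $C$-free, so by Theorem \ref{c3} it has at most $\binom{n_i-1}2$ edges when $n_i\ge6$ and at most $\binom{n_i}3$ edges when $n_i\le5$, and the components $H_1,H_j$ have at least $3$ vertices. Maximizing $\sum_i|E(H_i)|$ over all partitions $n=\sum n_i$ with at least two parts of size $\ge3$ is routine: merging two parts never decreases the sum and no component on $\ge6$ vertices can be a clique, so the maximum is a two-part partition of shape $\{n-3,3\}$, $\{n-5,5\}$, or $\{n-6,6\}$, and in every case the resulting value is strictly smaller than the bound claimed in Theorem \ref{PM}. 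Hence Case II contributes no extremal $3$-graph.

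\emph{Case III: $H$ is connected and $C$-free with $M\subseteq H$} — the heart of the proof, and where I expect the main difficulty to lie. Connectivity yields an edge $g$ meeting both $e_1$ and $e_2$, and $P$-freeness (otherwise $e_1,g,e_2$ would be a loose $3$-path) forces $g$ to share two vertices with one of them, say $g=\{a,b,d\}$ with $e_1=\{a,b,c\}$ and $d\in e_2$; this exhibits a $K_4^3$-like core on $D_0:=\{a,b,c,d\}$ and marks $d$ as the prospective centre of a comet. I would then prove, by systematically testing potential copies of $P$ — played off against $e_1$, $e_2$, $g$, and the edges inside $D_0$ — and invoking $C$-freeness, that every edge of $H$ either lies inside $D_0$ or meets $D_0$ exactly in $\{d\}$. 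Given this, the edges inside $D_0$ number at most $\binom43=4$, the edges through $d$ that avoid $\{a,b,c\}$ form a graph on the remaining $n-4$ vertices and so number at most $\binom{n-4}2$, and therefore $|E(H)|\le4+\binom{n-4}2$. Chasing equality then forces all four triples inside $D_0$ and all $\binom{n-4}2$ pairs outside to be present, i.e. $H=\co(n)$; separately one checks that $\co(n)$ is connected, $C$-free, contains $M$, and is $P$-free — the last point holding because the unique edge of $\co(n)$ missing its centre cannot serve as the middle edge of a loose $3$-path while every other edge passes through the centre. For $n\le12$ the bound $4+\binom{n-4}2$ lies below the Case I value, so no new extremal $3$-graph appears there.

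Putting the three cases together yields, for $6\le n\le12$, $\ex_3(n;P|M)=20+\binom{n-6}3$ attained only by $K_6^3\cup K_{n-6}^3$; for $n=13$, $\ex_3(13;P|M)=40$ attained by $K_6^3\cup K_6^3\cup K_1^3$ and by $\co(13)$; and for $n\ge14$, $\ex_3(n;P|M)=4+\binom{n-4}2$ attained only by $\co(n)$ — exactly Theorem \ref{PM}. The single genuinely hard step is the propagation in Case III: excluding edges that meet $D_0$ in just one of $a,b,c$, or in two vertices of $D_0$ other than the pair $\{a,b\}$, while simultaneously forcing the core $K_4^3$ to be complete — this is where triangle-freeness, path-freeness, and the presence of two disjoint edges must all be brought to bear at once.
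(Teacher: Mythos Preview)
Your high-level split --- first dispose of the case $C\subseteq H$ via Corollary~\ref{coro}, then handle the $C$-free case --- is exactly the paper's decomposition: it writes $\ex_3(n;P|M)=\max\{\ex_3(n;P|C),\,\ex_3(n;\{P,C\}|M)\}$ and then compares Corollary~\ref{coro} with Theorem~\ref{PCM}. Your Case~I is correct. The difficulty is that your Cases~II and~III together amount to an attempted proof of Theorem~\ref{PCM}, and the argument in Case~III does not go through.

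The central structural claim in Case~III is false in general. Take $H$ on $\{a,b,c,d,e,f\}$ with edges $e_1=\{a,b,c\}$, $e_2=\{d,e,f\}$, $g=\{a,b,d\}$, and $h=\{a,e,f\}$; one checks directly that $H$ is connected, $\{P,C\}$-free, and contains $M$. With $D_0=\{a,b,c,d\}$ and prospective centre $d$, the edge $h$ meets $D_0$ in $\{a\}$ only --- neither inside $D_0$ nor through $d$. Replacing $g$ by the other bridging edge $h$ gives $D_0'=\{a,d,e,f\}$ with centre $a$, and now $g$ meets $D_0'$ in $\{a,d\}$. So for this $H$ no admissible choice makes your dichotomy hold, and the step ``hence $|E(H)|\le 4+\binom{n-4}{2}$'' is unsupported. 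There is also a prior gap: connectivity alone does not guarantee an edge meeting both members of a \emph{given} disjoint pair --- the $3$-graph $\{\{1,2,3\},\{1,2,7\},\{7,4,5\},\{4,5,6\}\}$ is connected and $\{P,C\}$-free, yet no edge touches both $\{1,2,3\}$ and $\{4,5,6\}$. In Case~II, the hypothesis ``at least two parts of size $\ge3$'' is unjustified when all edges lie in a single component and the rest are isolated vertices; covering that sub-case forces you back to Case~III on a smaller graph. You correctly flagged Case~III as the hard step, but the proposed propagation cannot succeed as stated.

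The paper's treatment of the $C$-free case (Theorem~\ref{PCM}) is organised quite differently, with no connected/disconnected split. It first disposes of the sub-case where $H$ is also $(P_2^3\cup K_3^3)$-free via a separate bound $2n-4$ (Lemma~\ref{pcppm}); otherwise it fixes a copy of $P_2^3$ on a $5$-set $U$ together with a disjoint edge in $W=V\setminus U$, partitions $W$ into $W_0$ (vertices isolated in $H[W]$) and $W_1$, bounds the pieces $H[U]$, $H(U,W)$, $H[W]$ through a sequence of small facts, and finishes by induction on $n$ with base $n=11$. The comet emerges only at the end of that induction, not from a one-shot structural dichotomy.
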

\noindent Note that the Tur\'an numbers $\mathrm{ex}_3(n;P|M)$ and $\ex^{(2)}_3(n;P)$ coincide for $n\ge8$.

We also find it useful to determine the  Tur\'an number for the pair $\{P,C\}$ conditioning on
3-graphs $H$ being non-intersecting.

\begin{theorem}\label{PCM}
    $$
    \mathrm{ex}_3(n;\{P,C\}|M)=
    \left\{\begin{array}{ll}
    2n-4 &\qquad\qquad\qquad\qquad\qquad\qquad\qquad\;\;\;\;\; \;\,\textrm{for } 6\le n\le 9,\\
    20 &\qquad\qquad\qquad\qquad\qquad\qquad\qquad\qquad\;\;\; \;\,\textrm{for }n=10,\\
    4 + \binom{n-4}{2} &\textrm{and} \quad \Ex_3(n;\{P,C\}|M)=\{\co(n)\}\,\quad\quad\textrm{for } n\ge 11.
    \end{array}\right.
    $$
\end{theorem}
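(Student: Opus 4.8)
The plan is to match a lower bound coming from explicit constructions with an upper bound that, for $n\ge 13$, reduces to Theorem~\ref{PM}, and for $6\le n\le 12$ splits according to whether $H$ is connected. For the lower bound I would use three families. For $n\ge 11$, the comet $\co(n)$, with $4+\binom{n-4}{2}$ edges: it is $P$-free by Theorem~\ref{PM}, it contains the two disjoint edges consisting of its head $\{a,b,c\}$ and any edge of the attached star, and it is $C$-free because its only edge avoiding the center $v$ is the head, while every edge through $v$ meets $\{a,b,c\}$ in $0$ or $2$ vertices; thus a loose triangle in $\co(n)$ would have to combine the head with two edges through $v$, one of which would then meet the head in exactly one vertex, which is impossible. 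For $n=10$, the disjoint union $K_5^3\cup K_5^3$, with $20$ edges, which avoids the connected hypergraphs $P$ and $C$ (both on more than five vertices) and is non-intersecting. For $6\le n\le 9$, the $3$-graph on $\{1,\dots,n\}$ with edges $\{1,2,3\},\{1,2,4\},\{1,3,4\},\{2,3,4\}$ and all triples $\{1,2,i\}$, $\{3,4,i\}$ for $5\le i\le n$, which has $4+2(n-4)=2n-4$ edges and whose $\{P,C\}$-freeness follows from a short case check according to which edge could play the middle role in a copy of $P$ or of $C$.

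For the upper bound with $n\ge 13$: a $\{P,C\}$-free $3$-graph $H$ containing $M$ is in particular $P$-free and non-intersecting, so $|E(H)|\le\ex_3(n;P|M)$ by Theorem~\ref{PM}. For $n\ge 14$ the unique $P|M$-extremal $3$-graph is $\co(n)$, which is $C$-free, giving $\ex_3(n;\{P,C\}|M)=4+\binom{n-4}{2}$ and $\Ex_3(n;\{P,C\}|M)=\{\co(n)\}$. For $n=13$ the two $P|M$-extremal $3$-graphs are $K_6^3\cup K_6^3\cup K_1^3$, which contains $C$, and $\co(13)$, which does not; hence $\ex_3(13;\{P,C\}|M)=40$ and $\Ex_3(13;\{P,C\}|M)=\{\co(13)\}$, consistent with the case $n\ge 11$ in the statement.

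Now let $6\le n\le 12$; after deleting isolated vertices we may assume every vertex of $H$ is covered by an edge. If $H$ is disconnected, with edge-bearing components on $n_1,\dots,n_t$ vertices ($t\ge2$, each $n_i\ge 3$, $\sum n_i\le n$), then $H$ is automatically non-intersecting and $|E(H)|\le\sum_i\ex_3(n_i;\{P,C\})$. Here $\ex_3(m;\{P,C\})=\binom m3$ for $m\le 5$ and $\ex_3(m;\{P,C\})=\binom{m-1}{2}$ for $m\ge 6$, the latter because the full star is $\{P,C\}$-free and extremal for $C$ (Theorem~\ref{c3}); a convexity argument over the admissible partitions then bounds $\sum_i\ex_3(n_i;\{P,C\})$ by $2n-4$ for $6\le n\le 9$, by $20$ for $n=10$, and by $1+\binom{n-4}{2}$ for $n=11,12$, in each case within the claimed bound. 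If a single component carries all edges but spans fewer than $n$ vertices, induction on $n$ combined with the monotonicity of the claimed function disposes of that case.

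The remaining — and main — case is that $H$ is connected, spanning, and non-intersecting. If $H$ is $P_2^3$-free then $|E(H)|\le n$ by Fact~\ref{p2}, which lies below the target everywhere in the range. Otherwise I would fix a copy of $P_2^3$, say $A=\{v,x_1,x_2\}$ and $B=\{v,y_1,y_2\}$ with $A\cap B=\{v\}$, together with a pair of disjoint edges supplied by non-intersectingness, and then use two kinds of exclusion: $P$-freeness forbids any edge that completes a copy of $P$ with edges already located — notably no edge disjoint from $A$ meets $\{y_1,y_2\}$ in exactly one vertex, and symmetrically with $A$, $B$ interchanged; and $C$-freeness forbids any edge $\{x_i,y_j,p\}$ with $p\notin\{v,x_1,x_2,y_1,y_2\}$, and more generally any edge closing a loose triangle. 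Propagating these exclusions from the disjoint pair through the rest of $H$ collapses it onto a sub-$3$-graph of a comet for the larger values of $n$ in this range, or onto one of finitely many small configurations otherwise, which yields the stated value and, for $n\ge 11$, the uniqueness of $\co(n)$. I expect this propagation step to be the main obstacle: the $P$- and $C$-constraints restrict the structure only loosely, so a direct count overshoots the target for small $n$, and one must track precisely how many edges can avoid the prospective center and which triples through it remain admissible, closing the residual discrepancy by hand in the low cases.
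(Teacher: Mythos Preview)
Your proposal has a fatal circularity. For $n\ge 13$ you bound $\ex_3(n;\{P,C\}|M)$ above by $\ex_3(n;P|M)$ and then invoke Theorem~\ref{PM} to evaluate the latter. But in the paper's logical structure Theorem~\ref{PM} is \emph{deduced from} Theorem~\ref{PCM}: the proof of Theorem~\ref{PM} writes
\[
\ex_3(n;P|M)=\max\{\ex_3(n;P|C),\,\ex_3(n;\{P,C\}|M)\}
\]
and reads off $\ex_3(n;\{P,C\}|M)$ from Theorem~\ref{PCM}. So you are using the result you are trying to prove. You also appeal to Theorem~\ref{PM} to certify that $\co(n)$ is $P$-free; that particular step is harmless since it can be checked directly, but the upper-bound appeal cannot be patched this way.

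Independently of the circularity, your treatment of the connected case for $6\le n\le 12$ is not a proof but a sketch of intentions. You write that ``propagating these exclusions \ldots\ collapses $H$ onto a sub-$3$-graph of a comet'' and that you ``expect this propagation step to be the main obstacle''; indeed it is, and nothing in your outline carries it out. The paper does not argue via a connected/disconnected split at all. Instead it reduces (via Lemma~\ref{pcppm}) to the case $H\supseteq P_2^3\cup K_3^3$, fixes such a copy with $U=e_1\cup e_2$ and $W=V\setminus U$, partitions $W$ into $W_0$ (isolated in $H[W]$) and $W_1$, classifies the $U$--$W$ edges into types $F^0,F^1,F^2$, and establishes a sequence of explicit inequalities --- e.g.\ $|H_1|\le 2|W_1|-3$, $|H[U]|+|H_1|\le 2|W_1|+2$ when $|W_1|\ge4$, and bounds from Theorem~\ref{c3} on $|H[U\cup W_0]|$ and $|H[W]|$ --- which are then combined case by case for $n=8,9,10$ and, for $n\ge 11$, fed into an induction on $n$ (removing a vertex of $W_0$). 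The uniqueness of $\co(n)$ for $n\ge 11$ comes out of the equality analysis in that induction, not from a structural collapse argument of the kind you describe.
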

\noindent Note that the Tur\'an numbers $\mathrm{ex}_3(n;\{P,C\}|M)$, $\mathrm{ex}_3(n;P|M)$, and
$\ex^{(2)}_3(n;P)$ coincide for $n\ge13$.

To prove Theorem \ref{PCM} we will need a lemma which states that if one, in addition to $\{P,C\}$,
forbids also $P^3_2\cup K^3_3$, then the formula, valid for $\mathrm{ex}_3(n;\{P,C\}|M)$ only for
$6\le n\le9$, takes over for all values of $n$.

\begin{lemma}\label{pcppm}
    For $n\ge 6$
    $$\ex_3(n;\{P,C,P^3_2\cup K_3^3\}|M)=2n-4.$$
\end{lemma}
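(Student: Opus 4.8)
The plan is to prove the two inequalities separately. For the lower bound, exhibit an $n$-vertex 3-graph $H$ containing $M=M_2^3$ (to satisfy the conditioning), free of $P$, $C$, and $P_2^3\cup K_3^3$, with exactly $2n-4$ edges. A natural candidate is a ``book''-type construction: take two fixed vertices $x,y$ and, on the remaining $n-2$ vertices, pick a perfect or near-perfect pairing; the edges are all triples $\{x,y,v\}$ together with a few triples forming one disjoint edge to force the presence of $M$. More robustly, one can take the 3-graph whose edges are $\{x,y,v\}$ for all $v$ (that is $n-2$ edges) plus a second ``star'' $\{x,y',v\}$ sharing the vertex $x$; a short check gives $2n-4$ edges, no loose path of length $3$ (any two edges meet, in fact in at least the vertex $x$, so there is no room to extend a path), no triangle, and no $P_2^3\cup K_3^3$ (there is no $K_3^3$ disjoint from an edge because every edge uses $x$). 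One must double-check this example actually realizes $2n-4$ and is not a subgraph of a star --- but since it contains $M$ that is automatic.

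For the upper bound, let $H$ be $\{P,C,P_2^3\cup K_3^3\}$-free, $H\supseteq M$, on $n\ge6$ vertices, and suppose $|E(H)|\ge 2n-3$; derive a contradiction. Fix a copy $M=\{e_1,e_2\}$ of two disjoint edges in $H$; write $W=e_1\cup e_2$ (six vertices) and $U=V(H)\setminus W$. The key structural fact we will extract is that, because $H$ has no $C$ and no $P$, the edges of $H$ must be very tightly organized around $W$ --- essentially every edge must meet both $e_1$ and $e_2$, or else one quickly builds a loose path of length $3$ or a triangle using $e_1$ or $e_2$ as one of the end edges. Concretely: an edge disjoint from $W$ would, together with $e_1,e_2$ and one connecting edge, tend to give $P$ (or give $P_2^3\cup K_3^3$ if that disjoint edge sits in a triple-rich region); an edge meeting exactly one of $e_1,e_2$ can be prepended to a path through $e_1,e_2$. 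After this reduction we are counting edges that each intersect both $e_1$ and $e_2$, and a separate bookkeeping of the few ``exceptional'' edges. A Fact \ref{p2} / Erd\H{o}s--Ko--Rado style argument then caps the link structure: for each vertex $v\in W$, the link graph of $v$ in $H$, restricted appropriately, must itself be $P_2^2$-free or matching-free after deleting $W$, so it has at most a linear number of edges, and summing over the (at most six) relevant vertices with the right inclusion-exclusion over $W$ yields a bound of the form $2n + O(1)$, which we then sharpen to $\le 2n-4$ by using $P$-freeness and $C$-freeness to rule out the boundary cases.

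The main obstacle I expect is the upper-bound case analysis near $W$: precisely classifying which configurations of edges inside and incident to the six vertices of $W$ are forbidden, and showing that the surviving configurations cannot push the count above $2n-4$. In particular, one must handle the possibility that $H$ restricted to $W\cup\{u\}$ for $u\in U$ is locally dense (close to $K_7^3$-like), which is exactly where the extra forbidden graph $P_2^3\cup K_3^3$ does its work: it prevents a disjoint $K_3^3$ from coexisting with a far-away edge, so the dense part cannot be ``too big and too spread out'' simultaneously. I would organize the argument by first proving a claim that at most a bounded number of edges are \emph{not} of the form ``meets both $e_1$ and $e_2$'', then a claim bounding the number of edges that do meet both, treating separately edges inside $W$ and edges with exactly one vertex in $U$; the per-vertex link bounds from Fact \ref{p2} are the workhorse, and the delicate point is avoiding double counting while keeping the constant at exactly $-4$.
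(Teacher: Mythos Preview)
Your lower-bound construction has a genuine error. You take all triples $\{x,y,v\}$ together with all triples $\{x,y',v\}$, both families sharing the vertex $x$. Then \emph{every} edge of your 3-graph contains $x$, so it is a sub-star and contains no two disjoint edges; in particular it does \emph{not} contain $M$, contradicting the conditioning. (You even note that ``every edge uses $x$'' when arguing $P_2^3\cup K_3^3$-freeness, and then immediately claim $M\subseteq H$ ``automatically''---these two statements are incompatible.) The edge count is also off: the two families overlap in $\{x,y,y'\}$, giving $2(n-2)-1=2n-5$ edges, not $2n-4$. The paper's construction is the natural fix: take two \emph{disjoint} pairs $\{v_1,v_2\}$ and $\{v_3,v_4\}$ and let $H$ consist of all triples containing one of these pairs. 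Now disjoint edges exist, the count is $2(n-2)=2n-4$ with no overlap, and among any three edges two must share the same pair, so no $P$, $C$, or $P_2^3\cup K_3^3$ can occur.

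For the upper bound, your plan is heading in a workable direction but misses the one-line reduction that makes the argument short. Because $H$ is $(P_2^3\cup K_3^3)$-free and $e\in H$, the sub-3-graph $H[V\setminus e]$ is $P_2^3$-free; Fact~\ref{p2} then gives $|H[V\setminus e]|\le n-3$ immediately, and symmetrically for $f$. Since $P$-freeness forbids edges meeting each of $e,f$ in exactly one vertex, this already yields $|H|\le 2(n-3)$ whenever $H[e\cup f]=\{e,f\}$. The remaining case (some extra edge inside $e\cup f$) forces every edge to meet $e$ or $f$ in at least two vertices, and a short pairwise-intersection argument on $F_e=\{h:|h\cap e|=2\}$ finishes it. Your proposed route---classify all edges by how they meet $W=e_1\cup e_2$, bound links vertex by vertex, and do inclusion--exclusion over the six vertices of $W$---can in principle be pushed through, but controlling the additive constant down to exactly $-4$ that way will require essentially the same structural observations repackaged less efficiently.
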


\section{Proof of Theorem \ref{main}}\label{proofRam}

As mentioned in the Introduction, the inequality $R(P;r)\ge r+6$, $r\ge1$, has been already  proved
in \cite{J}. We are going to show that $R(P;r)\le r+6$ for each $r=4,5,6,7$.

\bigskip

{\bf Case $r=4$}. Let us consider an arbitrary 4-coloring of the $\binom{10}{3}=120$ edges of the
complete 3-graph $K^3_{10}$.  There exists a color with at least $\frac 14\cdot 120 = 30$ edges.
Denote the set of these edges by $H$. Since, by  Theorem \ref{ex1},
$\Ex_3^{(1)}(10;P)=\{S^3_{10}\}$, and, by Theorem~\ref{ex2}, $\ex^{(2)}(10;P)= 24 < 30$, either
$P\subseteq H$ or $H\subseteq S_{10}^3$. In the latter case we delete the center  of the star
containing $H$, together with the incident edges, obtaining a 3-coloring of $K^3_9$. Since
$R(P;3)=9$, there is a monochromatic copy of $P$.

\bigskip

{\bf Case $r=5$}.
    The proof follows the lines of the previous one. We consider a \newline 5-coloring
 of the complete 3-graph $K^3_{11}$. There exists a color with at least $\binom{11}{3}/5=33$ edges. Denote the set of these edges by $H$. Again, by Theorems \ref{ex1} and  \ref{ex2}, either $P\subseteq H$ or $H\subseteq S^3_{11}$. In the latter case we  delete the center  of the star containing $H$, together with its incident edges, obtaining a 4-coloring of $K^3_{10}$. Since, as we have just proved, $R(P;4)=10$, there is a monochromatic copy of $P$.

\bigskip

{\bf Case $r=6$}.
This is the most difficult case in which we have to appeal to the 3rd Tur\'an number.
 We begin, as before, by considering an arbitrary 6-coloring of
 the complete 3-graph $K^3_{12}$ on the set of vertices $V$ and assuming that it does not yield
  a monochromatic copy of the path $P$. Then none of the color classes can be contained in a star
  $S_{12}^3$, since otherwise we would delete this star, obtaining a 5-coloring of $K_{11}^3$,
  which surely contains a monochromatic $P$.  By Theorems \ref{ex1} and \ref{ex2}, $S_{12}^3$  and
   $K^3_6\cup K^3_6$ are, respectively, the unique 1-extremal and 2-extremal 3-graph for $P$.
    Consequently, by Theorem \ref{ex3}, every color class with more than 32 edges must be a sub-3-graph
      of $K^3_6\cup K^3_6$.

There exists a color class with at least $\left\lceil\binom{12}{6}/6\right\rceil=37$ edges which,
as explained above, is contained in a copy $K$ of $K^3_6\cup K^3_6$. After deleting all the edges
of $K$ from $K^3_{12}$, we obtain a complete bipartite 3-graph $B$ with  bipartition $V=U\cup W$,
$|U|=|W|=6$, and  with $|E(B)|=220-40=180$ edges, colored by 5 colors. Note that any copy of
$K^3_6\cup K^3_6$   may share with $B$ at most 36 edges. Consequently, since $180/5=36$, every
color class has precisely 36 edges and, thus, is contained in $K^3_6\cup K^3_6$.

Let $G_i$, $i=1,2,3,4,5$, be the 5 color classes. Then, for each $i$, $G_i$ is fully characterized
by two partitions, $U=U_i'\cup U_i''$ and $W=W_i'\cup W_i''$. ($G_i$ is then a disjoint union of
two copies of $K_6^3$, one on the vertex set $U_i'\cup W_i'$, the other one on $U_i''\cup W_i''$,
with $U_i',U_i'',W_i', W_i''$ being the 4  missing edges (see Fig. \ref{R2}).)

\bigskip
\begin{figure}[!ht]
\centering
\includegraphics [width=7cm]{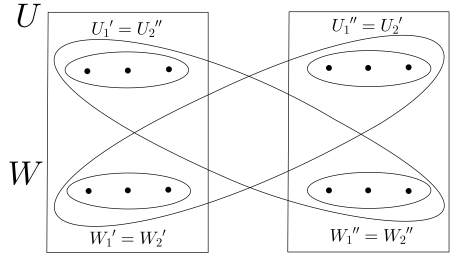}
\caption{Illustration to the proof of Theorem \ref{main}, case $r=6$} \label{R2}
\end{figure}

We now show that only 2 of the 5 color classes can be disjoint which is a contradiction (with a big
cushion). For $G_1$ and $G_2$ to be disjoint, we need that $\{U'_1,U''_1\}=\{U_2',U_2''\}$ and
$\{W'_1,W''_1\}=\{W_2',W_2''\}$, which simply means that one of the partitions, of $U$ or of $W$,
must be swapped. But this implies that  $G_1$, $G_2$, and $G_3$ cannot be pairwise disjoint.

\bigskip

{\bf Case $r=7$.}
As $\left\lceil\binom{13}{3}/7\right\rceil=41>40=\ex^{(2)}(13;P)$, the proof in this case follows the lines of the proofs for $r=4$ and $r=5$, and therefore is omitted.

\section{Proofs of Theorems \ref{ex2}, \ref{ex3}, and \ref{PM}}

In this section we first deduce Theorems \ref{ex2} and \ref{ex3} from Lemma  \ref{spojny} and
Theorems \ref{PM} and~\ref{PCM}, with a little help of some already known results (Theorems
\ref{c3}-\ref{nti}). Then we deduce  Theorem \ref{PM} from Corollary \ref{coro} and Theorem
\ref{PCM}. The proofs of  Lemmas   \ref{spojny}  and \ref{pcppm} will be presented in the next
section, while the proof of the crucial Theorem \ref{PCM}, based on Lemma \ref{pcppm}, is deferred
to the last section.

Throughout all the proofs, for convenience, we will be often identifying the edge set of a 3-graph with the 3-graph itself, writing, e.g., $|H|$ instead of $|E(H)|$.

\bigskip

\noindent{\bf\textit{Proof of Theorem \ref{ex2}.}} We consider the case $n=7$ separately.

\medskip

\noindent $\mathbf{(n=7).}$ By Theorem \ref{ex1},
$$\ex_3^{(1)}(7;P)=20\quad \textrm{and }\quad \mathrm{Ex}_3^{(1)}(7;P)=\{K_6^3\cup K_1^3\}.$$
Therefore,  to determine $\ex_3^{(2)}(7;P)$ we need to find the largest number of edges  in a
$7$-vertex $P$-free 3-graph $H$ which is not a sub-3-graph of $K_6^3\cup K_1^3$. Note that
$P\nsubseteq S_7^3 \nsubseteq K_6^3\cup K_1^3$, and thus,
$$
ex^{(2)}(7;P)\ge |S_7^3|=\binom{7-1}{2}=15.
$$
If $H$ is a 7-vertex $P$-free 3-graph with $|H|>15$, then, by Theorem \ref{c3},
  $H\supset C$. But then, since $H\nsubseteq K_6^3\cup K_1^3$, $H$ must be  connected. Consequently,
 by Lemma \ref{spojny}, $|H|\le 3\times 7-8 = 13$, a contradiction. Checking that  $S_7^3$
  is the unique 2-extremal 3-graph for $P$ and $n=7$ is left to the reader.

\medskip

\noindent $\mathbf{(n\ge8).}$ By Theorem \ref{ex1} we have
$$\ex_3^{(1)}(n;P)=\binom{n-1}{2}\quad \textrm{and}\quad\mathrm{Ex}_3^{(1)}(n;P)=\{S^3_n\}.$$
 Therefore, to determine $\ex_3^{(2)}(n;P)$ for $n\ge 8$ we need
 to find the largest number of edges in an $n$-vertex $P$-free
 3-graph $H$  which is not a subgraph of the star $S^3_n$. If $H$ is an
  intersecting family, then, by Theorem \ref{nti}, $|H|\le \mathrm{ex}^{(2)}_3(n;M) = 3n-8$.
 Otherwise, $H \supset M$ and, therefore, $|H|\le\ex_3(n;P|M)$.
 Using Theorem \ref{PM} one can verify that for $n\ge 8$ we have $\ex_3(n;P|M)>3n-8$.
 Consequently, $$\ex^{(2)}_3(n;P)=\max\{\mathrm{ex}^{(2)}_3(n;M),\;\ex_3(n;P|M)\}=\ex_3(n;P|M)$$
 and Theorem \ref{ex2} for $n\ge8$ follows  by Theorem \ref{PM}. \qed

\bigskip

\noindent{\bf\textit{Proof of Theorem \ref{ex3}.}}
By Theorems \ref{ex1} and \ref{ex2},
$$
\ex_3^{(2)}(12;P)=40\quad \textrm{and }\quad\Ex^{(1)}_3(12;P) \cup \Ex^{(2)}_3(12;P) =\{S^3_{12}, K^3_6\cup K^3_6\}.
$$
 Therefore, to determine $\ex^{(3)}_3(12;P)$ we have to find the largest number of edges in a 12-vertex
  $P$-free 3-graph $H$   such that $H\nsubseteq S^3_{12}$ and $H \nsubseteq K^3_6\cup K^3_6$.
The comet $\co(12)$ satisfies all the above conditions and has 32 edges. Let $H$ be a 12-vertex
$P$-free 3-graph satisfying the above conditions but $H\neq \co(12)$.  Since $H\neq S^3_{12}$,
either $H$ forms a nontrivial intersecting family and, by Theorem \ref{nti},
$$
|H|\le  3\times 12-8 = 28<32,
$$
or $H\supset M$. We may thus consider  the latter case only. If $H$ is disconnected, then, since $H\nsubseteq K^3_6\cup K^3_6$,  by Theorems \ref{ex1} and \ref{PM},
\begin{eqnarray*}
    |H|\le \max\{\ex_3(7;P)+\ex_3(5;P),\ex_3(8;P)+\ex_3(4;P),\\
    \ex_3(9;P)+\ex_3(3;P),\ex_3(10;P|M),\ex_3(11;P|M)\}=\\
    \max\{20+10,21+4,28+1,24,30\}= 30<32.
\end{eqnarray*}
Assume, finally, that $H$ is connected and $H\supseteq M$.  If, in addition, $H\supseteq C$, then,
by Lemma \ref{spojny}, we have
$$
|H|\le  3\times 12-8=28 < 32.
$$
Otherwise, $H$ is a $\{P,C\}$-free 3-graph containing $M$. Therefore, by Theorem \ref{PCM},
$$
|H|< \ex_3(12;\{P,C\}|M)=4 + \binom{12-4}{2}=32,
$$
as  the comet $\co(12)$ is the only $M$-extremal 3-graph for $\{P,C\}$.
\qed

\bigskip

\noindent{\bf\textit{Proof of Theorem \ref{PM}.}} Recall, that we want to determine the conditional
Tur\'an number $\ex_3(n;P|M)$. By considering  whether or not a 3-graph contains a triangle, we
infer that
$$
\ex_3(n;P|M)=\max\{\ex_3(n;P|\{M,C\}),\ex_3(n;\{P,C\}|M)\}.
$$
The number $\ex_3(n;\{P,C\}|M)$ is given by Theorem \ref{PCM}, whereas
$$
\ex_3(n;P|\{M,C\})=\ex_3(n;P|C),
$$
since the unique extremal graph from Corollary \ref{coro} contains $M$.
One can easily check that for $6\le n \le 12$,
$$
\ex_3(n;P|\{M,C\})>\ex_3(n;\{P,C\}|M),
$$
 for $n=13$,
 $$
 \ex_3(n;P|\{M,C\})=\ex_3(n;\{P,C\}|M)=4+\binom{13-4}{2}=40,
    $$
     while  for $n \ge 14$,
     $$
     \ex_3(n;P|\{M,C\})<\ex_3(n;\{P,C\}|M).
        $$
Theorem \ref{PM} follows now immediately from the respective parts of Corollary \ref{coro} and Theorem~\ref{PCM}.
\qed

\section{Proofs of Lemmas \ref{spojny} and \ref{pcppm}}\label{proofspojny}

For a 3-graph $F$ and a vertex $v\in V(F)$ set $F(v)=\{e\in F: v\in e\}$. \emph{The degree of $v$
in $F$} is defined as $|F(v)|$.

\bigskip

\noindent{\bf\textit{Proof of Lemma \ref{spojny}.}}

Let $H$ be a $P$-free, connected 3-graph with $V(H)=V$ and $|V|=n\ge 7$, containing a triangle.
With some abuse of notation, we denote by $C$ a fixed copy of the triangle in $H$. Set
$$U=V(C)=\{x_1,x_2,x_3,y_1,y_2,y_3\},$$
and let, recalling that we identify the edge set of a 3-graph with the 3-graph itself,
$$ C=\{\{x_i,y_j,x_k\}:\; \{i,j,k\}=\{1,2,3\}\}.$$
Thus,  the vertices $x_1,x_2,x_3$ are of degree two in $C$, while $y_1,y_2,y_3$ are of degree one.
Further, let
$$W=V\setminus U,\qquad |W|=n-6$$
and let $H(U,W)$ denote the set of all edges of $H$ which intersect both $U$ and $W$.

It was observed in \cite{JPR}, Fact 1, that
$$H(U,W)=H\cap T,$$
where
$$T=T_1\cup T_2$$
and
$$T_1=\left\{\{x_i,y_i,w_l\}:\; 1\le i\le 3,\; 1\le l\le n-6\right\},$$
$$T_2=\left\{\{x_i,x_j,w_l\}:\;1\le i<j\le 3,\; 1\le l\le n-6\right\}.$$
Moreover, no edge of $H(U,W)$ may intersect an edge of $H[W]$, since otherwise there would be a
copy of $P$ in $H$ (\cite{JPR}, Fact 2). This and the connectivity assumption  imply that
$H[W]=\emptyset$. Thus,
$$H=H[U] \cup H(U,W),$$
and, clearly $H(U,W)\neq\emptyset$, as $W\neq\emptyset$ (see Fig. \ref{FigR3}).

\bigskip
\begin{figure}[!ht]
\centering
\includegraphics [width=7cm]{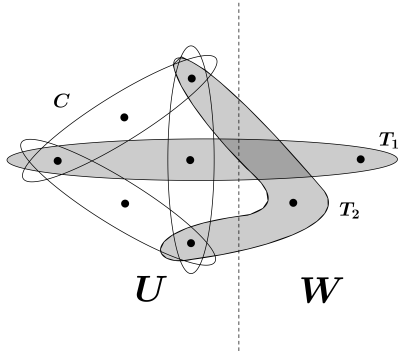}
\caption{Set-up for the proof of Lemma \ref{spojny}} \label{FigR3}
\end{figure}

If $H$ is an intersecting family (non-trivial due to the presence of $C$), then,  by
Theorem~\ref{nti}, $|H|\le 3n - 8$. We will show that if, on the other hand, $H\supseteq M$, then,
in fact, $|H|$ is even smaller. We begin with a simple observation.

\begin{fact}\label{inter}
    $H(U,W)$ is an intersecting family.
\end{fact}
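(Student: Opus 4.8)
## Proof proposal for Fact~\ref{inter}

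The plan is to argue by contradiction: suppose $H(U,W)$ contains two disjoint edges $e$ and $e'$. Recall that every edge of $H(U,W)$ lies in $T = T_1 \cup T_2$, so each of $e,e'$ has exactly two vertices in $U$ and one vertex in $W$. Since $|W| = n-6$ and the two $W$-vertices of $e,e'$ are distinct (they are disjoint), write $e = \{p, q, w\}$ and $e' = \{p', q', w'\}$ with $w \ne w'$ in $W$ and $\{p,q\}, \{p',q'\} \subseteq U$ disjoint pairs inside the triangle's vertex set.

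The key step is to exhibit a copy of $P$ (the loose path of length three) using $e$, $e'$, and one more edge taken from the triangle $C$, thereby contradicting $P$-freeness. The middle edge of the path should be an edge of $C$ that meets both $\{p,q\}$ and $\{p',q'\}$ in a single vertex each, and is otherwise disjoint from $e \cup e'$ except at those two contact points; then $e$ and $e'$ serve as the two end edges (each attached to $C$ through one vertex, with their $W$-vertices $w \ne w'$ as the two "free" ends). So the combinatorial heart of the matter is: for any two disjoint pairs $\{p,q\}$, $\{p',q'\}$ drawn from $U = \{x_1,x_2,x_3,y_1,y_2,y_3\}$ that actually arise as the $U$-parts of edges in $T$, there is an edge $c \in C$ with $|c \cap \{p,q\}| = 1$, $|c \cap \{p',q'\}| = 1$, and $c \cap \{p,q\} \ne c \cap \{p',q'\}$. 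This is a finite check over the structure of $T_1$ and $T_2$: the possible $U$-pairs are $\{x_i,y_i\}$ (from $T_1$) and $\{x_i,x_j\}$ (from $T_2$), and one verifies case by case (e.g., $e$'s pair is $\{x_1,y_1\}$ and $e'$'s is $\{x_2,y_2\}$, use $c = \{x_1, y_3, x_2\} \in C$; or $e$'s pair is $\{x_1,x_2\}$ and $e'$'s is a disjoint pair, etc.) that such a $c$ always exists, since disjointness of the two pairs forces them to occupy enough distinct vertices among the $x_i,y_i$ to leave a suitable edge of $C$ available.

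I expect the main obstacle to be nothing deep but rather the bookkeeping: one must be careful that the three edges $e$, $c$, $e'$ form a \emph{loose} path, i.e., that $e \cap e' = \emptyset$ (guaranteed by assumption and by $w \ne w'$), that $e \cap c$ and $c \cap e'$ are singletons, and that these two singletons are distinct (so the path does not degenerate). The distinctness of $w$ and $w'$ is what rules out the degenerate configurations where $e$ and $e'$ would share their $W$-vertex; that a disjoint pair of edges in $H(U,W)$ must have distinct $W$-vertices is immediate from $e \cap e' = \emptyset$. After the case analysis confirms the existence of the middle edge $c \in C$, the contradiction with $P \nsubseteq H$ is complete, so $H(U,W)$ must be intersecting.
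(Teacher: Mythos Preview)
Your proposal is correct and follows essentially the same route as the paper: assume two disjoint edges in $H(U,W)\subseteq T$ and exhibit a copy of $P$ using an edge of $C$ as the middle edge. The paper shortens the casework by first observing that $T_2$ is intersecting (any two pairs from $\{x_1,x_2,x_3\}$ overlap), so one of the two disjoint edges must lie in $T_1$; after that it simply asserts $C\cup\{e\}\cup\{f\}\supset P$, which is exactly the finite check you outline.
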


\proof Recall that $H(U,W)\subseteq T_1\cup T_2$ and note that $T_2$ is intersecting by definition. On the
other hand, if $e\in T_1$, $f\in T$, and $e\cap f=\emptyset$, then $C\cup\{e\}\cup\{f\}\supset P$, so either $e$ or $f$ cannot be in $H$.
\qed

\bigskip

Let $f,h\in H$ satisfy $f\cap h = \emptyset$. By Fact \ref{inter}, at least one of $f$ and $h$
belongs to $H[U]$. If both of them were in $H[U]$ then, clearly, $f\cup h=U$ and,  by the
$P$-freeness of $H$, each  $e\in H(U,W)$  would need to be disjoint from one of them. In summary,
if $H\supseteq M$, then there exist two disjoint edges $e,f\in H$ such that $e \in H(U,W)$ and
$f\in H[U]$.

If $e\in T_1$, then one can easily check by inspection that $C\cup\{e\}\cup\{f\}\supset P$. Thus,
$e\in T_2$, say $e\cap U=\{x_1,x_2\}$. The only edge in $H[U]$ disjoint from $e$ which does not
create a copy of the path $P$ with $C\cup \{e\}$ is $f=\{x_3,y_1,y_2\}$  (see Fig. \ref{FigR4}).
Further, observe that all triples in $T$, except those of the type $\{x_1,x_2,w\}$, $w\in W$, form
a copy of $P$ with $f$ and some edge of $C$.

\bigskip
\begin{figure}[!ht]
\centering
\includegraphics [width=7cm]{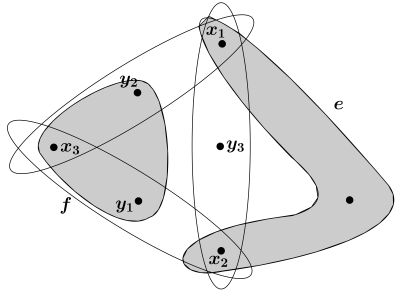}
\caption{Illustration to the proof of Lemma \ref{spojny}} \label{FigR4}
\end{figure}

Hence,
$$
H(U,W) \subseteq \{\{x_1,x_2,w\}: w\in W\}.
$$
and, consequently, $|H(U,W)|\le |W| = n-6$. Let
$$
X=\{\{y_1,y_2,y_3\},\{x_j,y_i,y_3\}, \{x_i,x_3,y_3\}, i\in \{1,2\}, j\in \{1,2,3\}\},
$$
Notice that $|X|=9$ and, for each $h\in X$, we have $C\cup \{e,f,h\}\supset P$. Thus, $H[U]\subseteq
\binom{U}{3}\setminus X$, so that $ |H[U]|\le 20-9=11. $ Consequently, for $n\ge7$,
$$
|H|=|H[U]|+|H(W,U)|\le 11 + n-6< 3n-8. \qed
$$

\bigskip

\noindent{\bf\textit{Proof of Lemma \ref{pcppm}.}} Let $V$ be a set with $|V|=n\ge 6$.
 Fix four vertices $v_1,v_2,v_3,v_4 \in V$ and define a 3-graph $H^{(0)}_n$ on $V$ as
    $$
    H^{(0)}_n=\left\{ h\in \binom{V}{3}:\quad  \{v_i,v_{i+1}\}\subset h,\quad i \in \{1,3\}\right\},
    $$
Note that  $H^{(0)}_n\supset M$ and $|H^{(0)}_n|=2n-4$.  Moreover, since every edge contains one of
the pairs $\{v_1,v_2\}$ or $\{v_3,v_4\}$,
      among any three edges  at least two share two vertices. Therefore, $H^{(0)}_n$ is
 $\{P,C,P^3_2\cup K_3^3\}$-free and, thus,
    $$\ex_3(n;\{P,C,P^3_2\cup K_3^3\}|M)\ge 2n-4.$$

    \bigskip

        To show the opposite inequality, consider a $\{P,C,P^3_2\cup K_3^3\}$-free
     3-graph $H$ containing $M=\{e,f\}$, with $V(H)=V$, $|V|=n\ge6$. Since $H$ is $P_2^3\cup
     K_3^3$-free, $H[V\setminus e]$ is $P^3_2$-free, and
     by Fact \ref{p2},
        $$
        |H[V\setminus e]|\le n-3\quad\mbox{ and }\quad|H[V\setminus f]|\le n-3.
        $$
      Also, since $H$ is $P$-free, there is no edge $h\in H$ with $|h\cap e|=|h\cap f| = 1$. Hence, if $|H[e\cup f]|=2$, then $|H| \le 2(n-3) = 2n-6$.

On the other hand, if there exists an edge $h\in H[e \cup f]\setminus\{e,f\}$, then, since $H$ is
$P^3_2\cup K_3^3$-free, all  edges of $H$ intersect one of $e$ or $f$ on at least two vertices. Let
$$
F_e = \{h\in H: |h\cap e|=2\}, \quad F_f =  \{h\in H: |h\cap f|=2\}.
$$
If there existed $h_1,h_2 \in F_e$ with $|h_1\cap h_2|=1$, then, depending on whether \newline
$|(h_1\cup h_2)\cap f|=0,1$, or 2, the edges $\{h_1,h_2,f\}$ would form, respectively, a
copy of $P^3_2\cup K_3^3$, $P$, or $C$ (see Fig. \ref{FigR5}).

\bigskip
\begin{figure}[!ht]
\centering
\includegraphics [width=10cm]{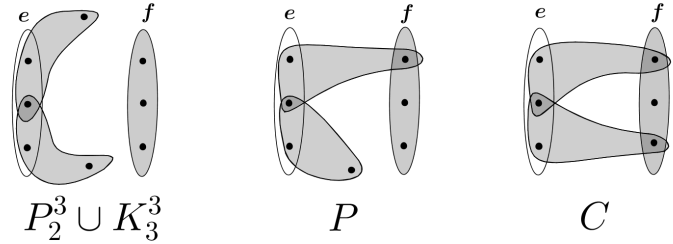}
\caption{Illustration to the proof of Lemma \ref{pcppm}} \label{FigR5}
\end{figure}

Thus,
$$
\forall h_1,h_2 \in F_e, \quad |h_1\cap h_2|=2,
$$
so, either all pairs $h_1,h_2\in F_e$ share two vertices of $e$ or all pairs $h_1,h_2\in F_e$ share one vertex of $V\setminus e$ (and another in $e$)

This implies that
$$
|F_e| \le \max\{n-3,3\}=n-3.
$$
Similarly, $|F_f|\le n-3$ and,
consequently,
$$
|H| = |\{e,f\}| + |F_e|+|F_f| \le 2 + (n-3) + (n-3)=2n-4. \qed
$$

\section{Proof of Theorem \ref{PCM}}

This section is entirely devoted to proving Theorem \ref{PCM}, that is, to determining the largest number of edges in an $n$-vertex 3-graph which is $P$-free and $C$-free but is not an intersecting family.

First note that since $|V(P^3_2\cup K_3^3)|=8$, no $n$-vertex 3-graph, $n=6,7$, contains a copy of $P^3_2\cup K_3^3$ and therefore, by Lemma \ref{pcppm},
$$
\ex_3(n;\{P,C\}|M)=\ex_3(n;\{P,C,P^3_2\cup K_3^3\}|M)=2n-4.
$$
 Thus, from now on we will be assuming that $n\ge8$.
 Define a sequence of 3-graphs
 $$
 H_n=\left\{ \begin{array}{ll}
 H_n^{(0)} & \textrm{ for } 8\le n\le 9,\\
 K_5^3\cup K_5^3 & \textrm{ for } n=10,\\
 \co(n) & \textrm{ for } n\ge 11,
 \end{array} \right.
 $$
 where   $H_n^{(0)}$ is the 3-graph introduced in the proof of Lemma \ref{pcppm}.
 By  simple inspection one can see that $H_n$ is $\{P,C\}$-free and contains $M$. Hence
    $$
    \mathrm{ex}_3(n;\{P,C\}|M)\ge |H_n|=
    \left\{\begin{array}{ll}
    2n-4 &\textrm{for } 8\le n\le 9,\\
    20 & \textrm{for }n=10,\\
    4 + \binom{n-4}{2} &\textrm{for } n\ge 11.
    \end{array}\right.
    $$
The main difficulty lies in showing the reverse inequality, namely, that any $\{P, C\}$-free 3-graph $H$ on  $n\ge8$ vertices, containing  $M$,  satisfies $|H|\le |H_n|$. Moreover, for $n\ge11$, we want to show that the equality is reached by the extremal 3-graph $H_n=\co(n)$ only.
We may assume that $H$ contains a copy of $P^3_2\cup K_3^3$, since otherwise, by Lemma \ref{pcppm},
$$|H|\le 2n-4\le |H_n|,$$
where the last inequality is strict for $n\ge10$. Before we turn to the actual proof of Theorem \ref{PCM}, we need to introduce some notation and prove preliminary results about the structure of $H$.

\subsection{Preparations for the proof}\label{prepa}

We assume that $H$ is $\{P, C\}$-free and contains a copy of $P^3_2\cup K_3^3$.
 Let $e_1,e_2\in H$ and $x\in V=V(H)$ be such that $e_1\cap e_2=\{x\}$ and there is an edge in $H$ disjoint from $e_1\cup e_2$. We know that such a choice of $e_1,e_2,x$ exists, because  $H\supseteq P^3_2\cup K_3^3$. We split  $V=U\cup W$, where
 $$
 U=e_1\cup e_2,\quad \text{and}\quad  W=V\setminus U.
 $$
  Note that  $|U|=5$ and  $|W|=n-5$.  Further set
  $$
   H(U,W)=H\setminus (H[U]\cup H[W])
  $$
  for the sub-3-graph of $H$ consisting of all edges intersecting both, $U$ and $W$.
   Notice that  $H[W]\neq\emptyset$, and thus the set $W_0$  of vertices of degree 0 in $H[W]$ has size
   \begin{equation}\label{n-8}
   |W_0|\le n-8.
   \end{equation} Set also $W_1=W\setminus W_0$
(see Fig. \ref{FigR6}).

\bigskip
\begin{figure}[!ht]
\centering
\includegraphics [width=9cm]{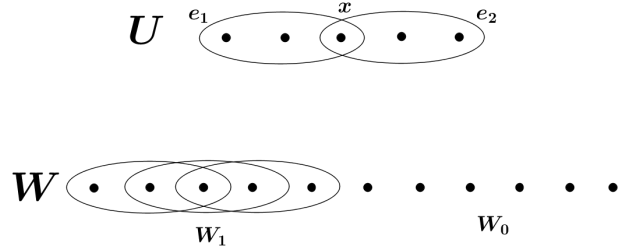}
\caption{Set-up for the proof of Theorem \ref{PCM}} \label{FigR6}
\end{figure}
Let us split
$$H[U]=\{e_1,e_2\}\cup E(x)\cup E(\bar x),$$
where $E(x)$ contains all edges of $H[U]$ which contain vertex $x$, except for $e_1$ and $e_2$, while $E(\bar x)$ contains all other edges of $H[U]$.
Note that
\begin{equation}\label{eq2}
\max\{|E(x)|, |E(\bar x)|\}\le 4.
\end{equation}

\bigskip

We also split the set of edges of $H(U,W)$.
First, notice that if for some $h\in H(U,W)$ we have $|h\cap U|=1$, then $h\cap U = \{x\}$, since otherwise $h$ together with $e_1$ and $e_2$ would form a copy of $P$ in $H$. We let
$$
F^0=\{h\in H(U,W): h\cap U=\{x\}\}.
$$

The edges $h\in H(U,W)$ with $|h\cap U|=2$ must satisfy $h\cap U \subset e_1$ or $h\cap U\subset e_2$,
since otherwise $h$ together with  $e_1$ and $e_2$ would form a copy of $C$ in $H$. For $k=1,2$ define
$$
F^k=\{h\in H(U,W): \quad |h\cap U\setminus\{x\}|=k\}.
$$
We have $H(U,W)=F^0\cup F^1\cup F^2$. (Note that in each case $k=0,1,2$, the superscript $k$ stands for the
common size of the set $h\cap U\setminus\{x\}$ -- see Fig.
\ref{FigR7}.)

\bigskip
\begin{figure}[!ht]
\centering
\includegraphics [width=7cm]{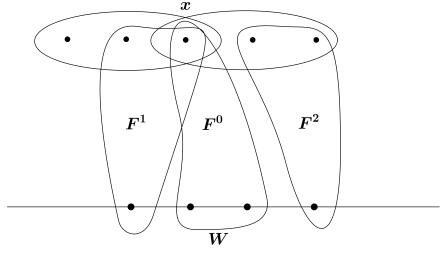}
\caption{Three types of edges in $H(U,W)$} \label{FigR7}
\end{figure}

 For a sub-3-graph $F\subseteq H(U,W)$ and  $i=0,1$, set
$$
F_i=\{h\in F: h \cap W \subset W_i\},
$$
which in the  important case of $F=H(U,W)$ will be abbreviated to $H_i$. In particular, for
$i=0,1$, $H_i=F_i^0\cup F_i^1\cup F_i^2$, where $F_i^0$ is the subset of edges  $h \in F^0$ with
$|h\cap W_i|=2$, while $F_i^k$, $k=1,2$, is the subset of edges  of $F^k$ whose unique vertex in
$W$ lies in $W_i$.

    A simple but crucial observation is that, since $H$ is $P$-free, for every two  disjoint
 edges in $H$,   no  edge  may intersect each of them in exactly one vertex.
    Thus, there is no edge in $H$ with one vertex in each of the sets, $U$, $W_0$ and $W_1$. Therefore,
\begin{equation}\label{HHH}
    H(U,W)=H_0\cup H_1,
\end{equation}
    and consequently,
    \begin{equation}\label{r2}
    \begin{aligned}
    H &= H[U]\cup H(U,W)\cup H[W]=H[U]\cup H_0\cup H_1\cup H[W]\\
    &= H[U\cup W_0]\cup H_1\cup H[W].
    \end{aligned}
    \end{equation}
Furthermore, by the same principle, if $e\in F^0_1$, then the pair $e\cap W_1$ must be \emph{nonseparable} in $H[W_1]$, that is, every edge of $H[W_1]$ must contain both these vertices or none.  Since, as it can be easily proved, there are at most $|W_1|$ nonseparable pairs in $W_1$,
    \begin{equation}\label{r9}
     |F^0_1|\le |W_1|.
    \end{equation}


Another consequence of the above observation  is that $F^1_1=\emptyset$. Thus,
\begin{equation}\label{r6}
H_1=F_1^0\cup F_1^2.
\end{equation}
To make use of (\ref{r6}), in addition to (\ref{r9}), we  need to bound  $|F_1^2|$ which, however, requires a detailed analysis of
the degrees of vertices $v\in W$ in the 3-graphs $F^k$, $k=0,1,2$.
For  $v\in W$ and $F\subseteq H$, denote by $F(v)$ the degree of $v$ in $F$.

It can be easily checked that, since $H$ is $P$-free, for every $v\in W$ either
 \begin{equation}\label{FORF}
 F^0(v)=\emptyset\quad\mbox{ or }\quad F^2(v)=\emptyset.
  \end{equation}
  Moreover, by the definitions of $F^1$ and $F^2$,
  \begin{equation}\label{4and2}
  |F^1(v)|\le 4\quad{and }\quad|F^2(v)|\le 2.
  \end{equation}

  For $v\in W_0$, by the remark preceding (\ref{HHH}), $|F^0(v)|\le |W_0|-1$, and thus, by (\ref{FORF}), (\ref{4and2}), and (\ref{n-8}),
$$
 |H(v)|=|F^0(v)|+ |F^1(v)|+|F^2(v)|\le 4+  \max\{2,n-9\}.
$$
In particular,  for $n=10$,
\begin{equation}\label{o16}
\forall v \in W_0,\quad |H(v)| \le 6,
\end{equation}
while for $n\ge 11$,
\begin{equation}\label{o17}
\forall v \in W_0,\quad |H(v)| \le n-5,
\end{equation}
where the equality for $n\ge 12$ is achieved only when $|F^0(v)|=n-9$, $|F^1(v)|=4$, and $F^2(v)=\emptyset$.

 Consider now $v\in W_1$. For each $e\in F^0$, the pair $e\cap W$ must be nonseparable and $v$ belongs to at most two nonseparable pairs. Thus, $|F^0(v)|\le 2$ and, consequently, by (\ref{FORF}) and (\ref{4and2}),
\begin{equation}\label{r8}
\forall v \in W_1,\quad|H_1(v)|=|F^0(v)| + |F^2(v)|\le 2.
\end{equation}

    One can also show, that
        \begin{equation}\label{r10}
        |F_1^2|\le \max\{|W_1|,2|W_1|-4\}.
        \end{equation}
        Indeed, if for all $v\in W_1$ we have $|F^2(v)|=|F^2_1(v)|=1$, then $|F^2_1|\le |W_1|$. Otherwise, let  $v\in W_1$ have, by (\ref{4and2}), $|F^2(v)|=2$ and let $\{v,v',v''\}\in H[W]$. Since $H$ is $P$-free, $F^2(v')=F^2(v'')=\emptyset$, and therefore, again by (\ref{4and2}),
        $$
        |F^2_1|\le 2(|W_1|-2)=2|W_1|-4.
        $$

Now we are ready to set bounds on the number of edges in $H_1$, as well as in $H[U]\cup H_1$, which will be repeatedly used in the proof of Theorem \ref{PCM}. Recall that $|W_1|\ge3$.
\bigskip

\begin{fact} We have
\begin{equation}\label{r4}
|H_1|\le 2|W_1|-3
\end{equation}
and, for $|W_1|\ge 4$,
\begin{equation}\label{r0}
|H[U]| + |H_1|\le 2|W_1|+2.
\end{equation}
\end{fact}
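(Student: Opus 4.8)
The plan is to assemble both bounds from the pieces already set up in Section~\ref{prepa}. For (\ref{r4}), recall from (\ref{r6}) that $H_1=F_1^0\cup F_1^2$, so $|H_1|\le |F_1^0|+|F_1^2|$. By (\ref{r9}) we have $|F_1^0|\le|W_1|$, and by (\ref{r10}) we have $|F_1^2|\le\max\{|W_1|,2|W_1|-4\}$. If $|W_1|\ge4$ this gives $|H_1|\le |W_1|+(2|W_1|-4)=3|W_1|-4$, which is not yet good enough, so the main work is to show that $|F_1^0|$ and $|F_1^2|$ cannot both be close to their individual maxima simultaneously. The key is (\ref{r8}): every $v\in W_1$ satisfies $|F^0(v)|+|F^2(v)|\le 2$. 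Summing this over $v\in W_1$ and counting incidences — each edge of $F_1^0$ has two endpoints in $W_1$, each edge of $F_1^2$ has one endpoint in $W_1$ — yields $2|F_1^0|+|F_1^2|\le 2|W_1|$, i.e. $|H_1|\le|F_1^0|+|F_1^2|\le 2|W_1|-|F_1^0|$. So if $F_1^0\neq\emptyset$ we are done. When $F_1^0=\emptyset$, then $|H_1|=|F_1^2|\le 2|W_1|-4\le 2|W_1|-3$ when $|W_1|\ge4$, and for $|W_1|=3$ one checks directly (using $|F^2(v)|\le2$ and the $P$-freeness argument behind (\ref{r10})) that $|F_1^2|\le3=2|W_1|-3$. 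That covers (\ref{r4}).

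For (\ref{r0}), under the assumption $|W_1|\ge4$, I would add $|H[U]|$ to the bound just obtained. Here $|H[U]|=|\{e_1,e_2\}|+|E(x)|+|E(\bar x)|=2+|E(x)|+|E(\bar x)|$, and by (\ref{eq2}) each of $|E(x)|,|E(\bar x)|$ is at most $4$; but again the crude bound $2+4+4=10$ is too weak, so one needs the interaction with $H_1$. The point is that edges of $H[U]$ and of $F_1^2$ (which all contain a pair inside $e_1$ or inside $e_2$) are heavily constrained by $P$-freeness and $C$-freeness relative to the disjoint edge in $H[W]$ witnessing that $v\in W_1$ has positive degree; in particular, if $|E(\bar x)|$ is large then few of the pairs in $e_1,e_2$ can be "reused" by $F_1^2$, and one can bound $|H[U]|+|F_1^2|$ by something like $|W_1|+$ constant, after which adding $|F_1^0|\le|W_1|$ gives the claimed $2|W_1|+2$. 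I would organize this by a short case analysis on the structure of $H[U]$ — essentially on how many of the $\binom52=10$ pairs in $U$ are "blocked" as cross-pairs $\{a,b\}$ with $a\in e_1\setminus\{x\}$, $b\in e_2\setminus\{x\}$ versus pairs inside $e_1$ or $e_2$ — tracking for each configuration how large $H[U]$, $F_1^0$, and $F_1^2$ can be.

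The main obstacle I expect is exactly the simultaneous bookkeeping in (\ref{r0}): one must prevent $|H[U]|$, $|F_1^0|$, and $|F_1^2|$ from all being near their separate maxima, and the clean way to do that is to fix one disjoint edge $\{v,v',v''\}\in H[W]$ with $v\in W_1$ (guaranteed since $W_1\neq\emptyset$ and $H[W]\neq\emptyset$) and observe that any two edges of $H[U]\cup H(U,W)$ that intersect this edge appropriately, together with it, would create a forbidden $P$ or $C$; this forces most edges touching $U$ to share a common pair, collapsing the count. The small cases $|W_1|=3$ (for (\ref{r4})) and the boundary $|W_1|=4$ (for (\ref{r0})) will need to be checked by hand, but are routine once the incidence-counting identity $2|F_1^0|+|F_1^2|\le 2|W_1|$ is in place.
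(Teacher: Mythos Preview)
Your argument for (\ref{r4}) has a real gap. From the incidence inequality $2|F_1^0|+|F_1^2|\le 2|W_1|$ you correctly deduce $|H_1|\le 2|W_1|-|F_1^0|$, but then you write ``if $F_1^0\neq\emptyset$ we are done.'' That is not so: you need $|F_1^0|\ge 3$, and the cases $|F_1^0|\in\{1,2\}$ are not covered by your argument (nor by the $F_1^0=\emptyset$ branch). A concrete example: with $|W_1|=4$, one $F_1^0$-edge and six $F_1^2$-edges would satisfy your incidence bound ($2\cdot1+6=8=2\cdot4$) yet give $|H_1|=7>5=2|W_1|-3$; your counting alone does not exclude this. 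The paper closes this gap with one extra observation you are missing: fix an edge $h\in H[W]$ and check that at most three edges of $H_1$ meet $h$ (this uses nonseparability for $F_1^0$ and the fact that two $F_1^2$-edges at distinct vertices of $h$ using different $U$-pairs create a copy of $P$ through $h$). Every other edge of $H_1$ has all its $W_1$-vertices in $W_1\setminus h$, so by (\ref{r8}) there are at most $2(|W_1|-3)$ of them, giving $|H_1|\le 3+2(|W_1|-3)=2|W_1|-3$ directly.

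For (\ref{r0}) your outline is too vague to count as a proof, and it misses the two structural facts that make the bound fall out cleanly: if $F^0\neq\emptyset$ then $E(\bar x)=\emptyset$, and if $F_1^2\neq\emptyset$ then $E(x)=\emptyset$. Together these give $|H[U]|\le 6$ whenever $H_1\neq\emptyset$; combining with (\ref{r9}) or (\ref{r10}) handles the case where one of $F_1^0,F_1^2$ is empty (then $|H[U]|+|H_1|\le 6+(2|W_1|-4)$), while if both are nonempty one gets $|H[U]|=2$ and (\ref{r4}) finishes it. The ``case analysis on the structure of $H[U]$'' you propose is unnecessary once you have these two implications.
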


\proof

Let $h\in H[W]$. It is easy to check by inspection that $\sum_{v\in h}H_1(v)\le3$, while for $v\in
W_1\setminus h$, by~(\ref{r8}),  $|H_1(v)|\le 2$. This yields $|H_1|\le3 + 2(|W_1|-3)$ and takes
care of~(\ref{r4}).

If $H_1=\emptyset$ then (\ref{r0}) holds, as $|H[U]|\le10$.
To prove (\ref{r0}) also when $H_1\neq\emptyset$, we need a better bound on $|H[U]|$. To this end, note that if $F^0\neq\emptyset$ then $E(\bar x)=\emptyset$, while if  $F^2_1\neq\emptyset$ then $E(x)=\emptyset$.
Hence, by (\ref{r6}) and~(\ref{eq2}),
\begin{equation}\label{o12}
H_1\neq\emptyset\qquad\Rightarrow\qquad|H[U]|\le 6.
\end{equation}
So, if one of the sets, $F^0_1$ or $F^2_1$, is empty, then we get (\ref{r0}) by (\ref{o12}), (\ref{r9}), and (\ref{r10}). If both these sets are nonempty, then   $E(\bar x)=E(x)=\emptyset$, and thus $|H[U]|=2$. In this case (\ref{r0}) follows  by (\ref{r4}) with a margin.
\qed

\medskip

Since $H$ is $C$-free, on several occasions our proof  relies on two instances of Theorem~\ref{c3}. Namely, if $|W_0|\ge 1$ then
    \begin{equation}\label{r3}
    |H[U\cup W_0]|\le \binom{|W_0|+4}{2},
    \end{equation}
while if $|W|=n-5\ge 6$ then
\begin{equation}\label{hw}
|H[W]| \le \binom{n-6}{2}.
\end{equation}

Finally, there cannot be too many edges between $U$ and the vertex set of a copy of $P_2^3$ in $H[W]$ if there happens to be one.
For a subset $W'\subset W$, we denote by $H(U,W')$ the sub-3-graph of $H$ consisting of all edges intersecting both, $U$ and $W'$.

\medskip

\begin{fact}\label{o5}
    If $H[W]$ contains a copy $Q$ of $P^3_2$ with $V(Q)\subseteq W$, then
    \begin{equation}\label{eq7}
    |H(U,V(Q)) |\le 4.
    \end{equation}
\end{fact}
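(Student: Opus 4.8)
\textit{Proof plan for Fact \ref{o5}.}

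The plan is to suppose, for contradiction, that $|H(U,V(Q))|\ge 5$, where $Q=\{q_1q_2q_3,q_3q_4q_5\}$ is a copy of $P_2^3$ with $V(Q)=\{q_1,\dots,q_5\}\subseteq W$, and to extract a forbidden configuration. First I would recall the two structural restrictions already established in Section~\ref{prepa}: since $H$ is $P$-free, every edge of $H(U,W)$ meets $U$ either in the single vertex $x$ (type $F^0$) or in a pair contained in $e_1$ or in $e_2$ (types $F^1,F^2$); moreover, for each fixed vertex $v\in W$ the degrees satisfy (\ref{4and2}) and the dichotomy (\ref{FORF}). Applying these to the five vertices of $V(Q)$, the contribution of each $q_j$ to $H(U,V(Q))$ is at most $4$, but the real leverage comes from the $P$-freeness of $H$ read \emph{inside} $Q$: because $Q$ already contains two edges sharing the vertex $q_3$, any edge of $H$ meeting $U$ and attaching to $Q$ in the ``wrong'' way will complete a loose path of length three together with one edge of $Q$ and one of $e_1,e_2$.

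Next I would run the case analysis on where the $\ge 5$ edges sit. The key sub-claims I expect to need are: (a) no edge of $H(U,V(Q))$ can contain two of the vertices $q_1,q_2,q_4,q_5$ from a single edge of $Q$ together with a vertex of $U$ in a way that, with the opposite edge of $Q$ and with $e_1$ or $e_2$, forms $P$; (b) an edge of type $F^0$ hitting $V(Q)$ uses the pair $x$ plus two vertices of $V(Q)$, and such a pair in $W$ must be nonseparable in $H[W]$ — but $q_1q_2$, $q_2q_3$, $q_3q_4$, $q_4q_5$ are all separated by the other edge of $Q$, so an $F^0$-edge can only use one of the very few remaining pairs, which together with $Q$ and $e_1\cup e_2$ again builds $P$ or $C$; (c) edges of type $F^1$ or $F^2$ pendant at some $q_j$, combined with the edge of $Q$ not containing $q_j$ and with whichever of $e_1,e_2$ is disjoint from that edge, give a copy of $P$. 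Collecting these, the only surviving possibilities leave room for at most $4$ edges between $U$ and $V(Q)$, contradicting the assumption. I would organize this as: first kill $F^0$ contributions using nonseparability, then bound the $F^1\cup F^2$ contributions at the two ``endpoint'' vertices $q_1,q_5$ versus the ``interior'' vertices $q_2,q_3,q_4$, using that $q_3$ is the shared vertex of $Q$.

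The main obstacle, I expect, is bookkeeping the interaction between the two edges of $Q$: a pendant edge at $q_2$, say, is dangerous only relative to the edge $q_3q_4q_5$ of $Q$ and relative to the edge among $e_1,e_2$ disjoint from it, so one must check, edge by edge, that for \emph{every} choice of the attaching pair in $U$ at least one of the two companion edges is disjoint from the pendant edge. This is where Figures~\ref{FigR7} and the earlier $\{P,C\}$-free casework are really used; there is no single clever trick, just a careful exhaustion. Once the degree contributions are tallied — roughly, at most one usable $F^0$-edge, at most one pendant edge at each of the two or three ``usable'' vertices of $V(Q)$ — the total is at most $4$, which is exactly (\ref{eq7}), and the Fact follows. \qed
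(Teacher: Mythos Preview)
Your plan contains fixable errors and misses the one observation that makes the proof a two–liner. In (b) you assert that the pair $\{q_1,q_2\}$ is separated by the other edge of $Q$, but it is not: $q_3q_4q_5$ contains neither $q_1$ nor $q_2$. In fact precisely $\{q_1,q_2\}$ and $\{q_4,q_5\}$ are the nonseparable pairs inside $V(Q)$, which is what actually yields the bound of two $F^0$-edges. In (c) your proposed copy of $P$ uses ``the edge of $Q$ not containing $q_j$ together with one of $e_1,e_2$'', but that triple need not form $P$; the correct witness for $q_j\neq q_3$ is simply $h$ together with \emph{both} edges of $Q$. Finally, you never bound the contribution at $q_3$: an edge $\{u_1,u_2,q_3\}$ with $\{u_1,u_2\}\subset e_i$ and $x\in\{u_1,u_2\}$ produces the path $e_{3-i},\,\{u_1,u_2,q_3\},\,q_1q_2q_3$, so only $\{u_1,u_2\}=e_i\setminus\{x\}$ survives, giving at most two such edges. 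With these corrections your case analysis would go through, but it is far heavier than necessary.

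The paper's proof replaces all of this by a single symmetry observation: both $U$ and $V(Q)$ are vertex sets of copies of $P^3_2$, with centres $x$ and $q_3$ respectively. The structural restrictions established in Section~\ref{prepa} for edges meeting $U$ (one vertex in $U$ forces that vertex to be $x$; two vertices in $U$ force the pair into $e_1$ or $e_2$) therefore apply verbatim with $U$ and $V(Q)$ interchanged. Running the argument once in each direction gives at most two edges of $H(U,V(Q))$ with one vertex in $U$ and at most two with one vertex in $V(Q)$, hence $|H(U,V(Q))|\le 4$. There is no contradiction setup, no nonseparability, and no per-vertex bookkeeping.
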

\begin{proof}
    Note that, due to $P$-freeness of $H$, the only edges allowed in $H(U,V(Q))$ with one vertex in $U$ must
 belong to $F_0$ (there are at most two such edges). By symmetry, there are also at most two edges in
 $H(U,V(Q))$ with one vertex in $W$, which yields~(\ref{eq7}).
\end{proof}

\subsection{The proof}

The structure of the proof is as follows. We first settle the three smallest cases, $n=8,9,10$, one by one. Then we turn to the main case of $n\ge11$. Here, after quickly taking care of the easy subcase  $W_0=\emptyset$, we assume that $W_0\neq\emptyset$ and proceed by induction on $n$ with $n=11$ being the base case. This part is a bit pedestrian, but afterwards, the induction step is almost immediate.

Let $H$ be a $\{P, C\}$-free $n$-vertex 3-graph which contains a copy of $P^3_2\cup K_3^3$. We adopt the notation and terminology from Subsection \ref{prepa}.
In addition, for $v\in V$, we will write $H-v$ for  $H[V\setminus\{v\}]$.

\bigskip

\noindent{\bf$\mathbf{n=8}$.}
We have $|W_1|=3$, $|H[W]|=1$, and $W_0=\emptyset$.
If $H(U,W)=H_1=\emptyset$, then
$$
|H|=|H[U]|+|H[W]|\le 10 + 1<12=|H_8|,
$$
Otherwise, by (\ref{o12}), $|H[U]|\le 6$ and, therefore, by (\ref{r4}),
$$
|H|=|H[U]| + |H_1| + |H[W]| \le 6 + 3 + 1<12.
$$

\bigskip

\noindent{\bf$\mathbf{n=9}$.}
We have $|W|= 4$ and $|H[W]|\le \binom{4}{3}=4$.
If $W_0=\emptyset$ then, by (\ref{r0}),
$$
|H|=|H[U]| + |H_1| + |H[W]| \le 2|W|+2 + 4 = 14=|H_9|.
$$

If $W_0\neq\emptyset$ then $|W_0|=1$, $|W_1|=3$ and $|H[W]|=1$.
By (\ref{r4}), $|H_1|\le 3$, and
consequently, by (\ref{r2}) and (\ref{r3}),
$$
|H|=|H[U\cup W_0]|+|H_1| + |H[W]|\le 10 + 3 + 1=14.
$$

\bigskip

\noindent{\bf$\mathbf{n=10}$.}
We have $|W|=5$, $|W_0|\le 2$ and $|H[W]|\le \binom{5}{3}=10$.
If $W_0=\emptyset$ then, by (\ref{r0}), $|H[U]|+|H_1 |\le 2|W|+2=12$.
If, additionally, $|H[W]|\le 5$, then
$$
|H|=|H[U]| + |H_1| + |H[W]| \le 12 + 5< 20 =|H_{10}|.
$$
Otherwise, by Fact \ref{p2}, $H[W]$ contains a copy $Q$ of $P_2^3$ (note that $V(Q)=W_1$), and, by~(\ref{eq7}), $|H_1|\le 4$. Hence, using (\ref{o12}) along the way,
$$
|H|=|H[U]| + |H_1| + |H[W]| \le\max\{10+0, 6 + 4\}+ 10= 20.
$$

Now, let $W_0\neq\emptyset$. Fix $v\in W_0$ and notice that $H-v$ is $\{P,C\}$-free and contains $M$. Since  we have already proved that $\ex_3(9;\{P,C\}|M)=14$,
$$
|H-v|\le 14.
$$
Moreover, by (\ref{o16}), $|H(v)|\le 6$, and consequently,
$$
|H|=|H-v|+|H(v)|\le 14+6=20.
$$

\bigskip

\noindent {\bf $\mathbf{n\ge11.}$} The proof is by induction on $n$ with $n=11$ being the base case. First, however, we take care of a simple subcase when $W_0=\emptyset$, for which, by (\ref{r0}) and (\ref{hw}),
$$
|H|=|H[U]| + |H_1| + |H[W]| \le 2(n-5) + 2 + \binom{n-6}{2}= 3+\binom{n-4}{2}< |H_n|.
$$
Hence, in what follows we will be assuming that $W_0\neq\emptyset$.

\bigskip

\noindent {\bf $\mathbf{n=11}$ (base case).} Suppose first that $H[W]$ contains a copy $Q$ of $P^3_2$. Then $|W_0|=1$, $|W_1|=5$, $V(Q)=W_1$, and by (\ref{eq7}), $|H_1|\le 4$.
    Consequently, by (\ref{r2}), (\ref{r3}), and  (\ref{hw}),
    $$
    |H|=|H[U\cup W_0]|+|H_1| + |H[W]|\le 10 + 4 + 10< 25= |H_{11}|.
    $$

In the remainder of this part of the proof, besides the assumption that $W_0\neq\emptyset$, we will be also assuming that $H[W]$ is $P^3_2$-free and thus, by Fact \ref{p2}, $|H[W]|\le 6$.
We consider three cases with respect to the size of $|W_0|$.

    \bigskip

    \noindent {$\mathbf{|W_0|=1.}$}  We have $|W_1|=5$ and, by (\ref{r4}), $|H_1|\le 7$. Consequently, by (\ref{r2}) and (\ref{r3}),
        $$
            |H| =|H[U\cup W_0]|+|H_1|+|H[W]|\le 10+7+6  < 25.
        $$

    \noindent {$\mathbf{|W_0|=2.}$} We have $|W_1|=4$ and therefore $|H[W]|\le \binom{4}{3}=4$. Moreover, by (\ref{r4}), $|H_1|\le 5$ and finally, by (\ref{r2}) and (\ref{r3}),
        $$
        |H| =|H[U\cup W_0]|+|H_1|+|H[W]|\le 15+5+4  < 25.
        $$

  \noindent { $\mathbf{|W_0|=3.}$} We have $|W_1|=3$ and therefore $|H[W]|=1$. Moreover, by (\ref{r4}), $|H_1|\le 3$ and thus, by (\ref{r2}) and (\ref{r3}),
  $$
    |H| =|H[U\cup W_0]|+|H_1|+|H[W]|\le 21+3+1 = 25,
  $$
  with equality only when $|H_1|=3$ and $|H[U\cup W_0]|=21$.
    The latter, by the second part of Theorem \ref{c3}, is possible only when $H[U\cup W_0]$ is a star (with the center at $x$). This, in turn, implies that $F^2=\emptyset$ (otherwise $H$ would not be $P$-free) and, further, by (\ref{r6}), that $H_1=F^0_1$. Hence, $H=\co(11)$ with $x$ at the center and $W_1$ as the head.

\bigskip

\noindent {\bf $\mathbf{n\ge 12}$ (inductive step).}
Fix $v\in W_0$. By the induction hypothesis
$$
|H-v|\le 4+\binom{n-5}{2}
$$
with the equality only when $H-v=\co(n-1)$. Looking at the structure of $H-v$, if it is a comet, then it must have the center at $x$ and the head must be the unique edge of $H[W]$.
Moreover, by (\ref{o17}),
$|H(v)|\le n-5$,
with the equality only when $|F^0(v)|=n-9$, $|F^1(v)|=4$, and $F^2(v)=\emptyset$.
 Consequently,
$$
|H|=|H-v|+|H(v)|\le 4+\binom{n-5}{2}+n-5=|H_{n}|.
$$
and this bound is achieved only when both $H-v=\co(n-1)$ and $|H(v)|=n-5$. This, however, implies that $H=\co(n)$ (with the same center and head as in $H-v$.) Theorem \ref{PCM} is proved.

\section{Final comments}\label{final}

It would be interesting to decide if $R(P;r)=r+6$ for all $r$. If not, then what is the largest $r_0$ such that $R(P;r)=r+6$ for all $r\le r_0$?
To even partially answer these questions, we would need to compute the conditional Tur\'an numbers $\ex^{(s)}(n;P|M)$ for $s\ge3$.

For the related problem of computing $R(C;r)$ it is only known that $R(C;r)=r+5$ for $r=2,3$  and $R(C;r)\ge r+5$ for all $r$ (\cite{GR}).
Gyarfas and Raeisi conjecture in \cite{GR} that $R(C;r)=r+5$ for all $r$. To facilitate  our approach to this problem one would need to compute $\ex_3^{(s)}(n;C)$ for  $s\ge2$ and some small values of $n$. This would probably include calculating the conditional Tur\'an numbers $\ex_3(n;C|M)=\ex_3(n;C|P)$ which might be of independent interest. (The fact that the two numbers are the same was derived in \cite{JPR} from Theorem \ref{PM} which was conjectured there.)
In \cite{JPR} we  showed that $ex_3(n;C|M)\ge\binom{n-2}2+1$ and conjectured that, indeed, this lower bound is the true value of $ex_3(n;C|M)$.

\end{document}